\newtheorem{thm}{Theorem}[section]
\newtheorem{cor}[thm]{Corollary}
\newtheorem{lem}[thm]{Lemma}
\newtheorem{prop}[thm]{Proposition}
\newtheorem{conj}[thm]{Conjecture}
\newtheorem*{thm1.3}{Theorem 1.3$^{\,\prime}$}
\theoremstyle{plain}
\numberwithin{equation}{thm}
\theoremstyle{remark}
\def\P{\mathbb P}
\def\Q{\mathbb Q}
\def\lra{\longrightarrow}
\def\forb{\mathcal{O}_{\phi}^{+}}
\def\orb{\mathcal{O}_{\phi}^{-}}
\def\m{\mu_{\phi, \beta}(n)}
\def\phi{\varphi}
\title{Integer Points in Backward Orbits}
\author{Vijay A. Sookdeo}
\address{
Vijay Sookdeo\\
Department of Mathematics\\
The Catholic University of America\\
Washington, DC 20064 \\
}
\email{sookdeo@cua.edu}
\begin{document}

\begin{abstract}
A theorem of J. Silverman states that a forward orbit of a rational map $\phi(z)$ on $\mathbb P^1(K)$ contains finitely many $S$-integers in the number field $K$ when $(\phi\circ\phi)(z)$ is not a polynomial.  We state an analogous conjecture for the backward orbits using a general $S$-integrality notion based on the Galois conjugates of points.  This conjecture is proven for the map $\phi(z)=z^d$, and consequently Chebyshev polynomials, by uniformly bounding the number of Galois orbits for $z^n-\beta$ when $\beta\not =0$ is a non-root of unity.  In general, our conjecture is true provided that the number of Galois orbits for $\phi^n(z)-\beta$ is bounded independently of $n$.
\end{abstract}

\maketitle

\section{Introduction}

Let $K$ be a number field, $\phi:\mathbb P^1 \lra \mathbb P^1$ be a rational map of degree $\ge 2$ defined over $K$, and $\phi^n(z)$ be the $n$th iterate $(\phi\circ \cdots \circ\phi)(z)$.  The \emph{forward orbit} of $\beta \in \mathbb P^1(K)$ under $\phi$ is defined as $\forb(\beta)=\{\beta, \phi(\beta), \phi^2(\beta), \dots \}$ and the \emph{backward orbit} is defined as the collection of inverse images $$\orb(\beta)=\bigcup_{n\ge 0}\phi^{-n}(\beta).$$ A point $\beta$ is \emph{preperiodic} for $\phi$ if $\forb(\beta)$ is finite and \emph{exceptional} for $\phi$ if $\orb(\beta)$ is finite.  We write $\mbox{PrePer}(\phi, \overline K)$ for the set of preperiodic points of $\phi$ in $\mathbb P^1(\overline K)$, and $S$ for a finite set of places of $K$ which includes all the archimedean places.

When $\phi$, or some iterate of $\phi$, is a polynomial, $\forb$ may contain infinitely many distinct points in $\mathcal O_{K, S}$, the ring of $S$-integers in $K$.  In 1993, Silverman \cite{sil} proved that if $\phi^2(z)$ is not a polynomial, then $\forb(\beta)$ contains at most finitely many points in $\mathcal O_{K, S}$.  We would like to state an analogous conjecture for the backward orbit $\orb$ and give some evidences to support it. Since $\orb(\beta)\cap\P^1(L)$ is finite for any $\beta$ and any number field $L$ (see Corollary \ref{fin}), it is trivial to ask when $\orb$ contains finitely many points in $\mathcal O_{K,S}$.  It is better to ask is what conditions will guarantee $\orb(\beta)$ contains at most finitely many points in $\mathcal O_{\overline K, S}$, the ring of $S$-integers in $\overline K$.

To formulate the conjecture for backward orbits, we restate Silverman's result using a more flexible, geometric notion of integrality.  The set $\mathcal O_{K,S}$ can be thought of as all the points $P=[\gamma:1]\in \P^1(K)$ whose $v$-adic chordal distance $\delta_v(P, \infty)=1$ for all $v\not\in S$ (see Section \ref{integrality}). This means that the $S$-integral points in $K$ can be defined relative to $\infty$.  Additionally, the condition that $\phi^2(z)\not\in K[z]$ is equivalent to $\infty$ not being exceptional for $\phi$ (see \cite{sil}).  Therefore, we may state Silverman's result as follows:  If $\infty$ is not exceptional for $\phi$, then $\forb(\beta)$ contains at most finitely many points in $\P^1(K)$ which are $S$-integral relative to $\infty$. Supposing $f$ is a coordinate change of $\P^1$ taking $\infty$ to $\alpha$, we have, after possibly enlarging $S$, that $\gamma$ is $S$-integral relative to $\infty$ if and only if $f(\gamma)$ is $S$-integral relative to $\alpha$ (see Section \ref{integrality}).  This gives the following version of Silverman's Theorem.

\begin{thm}[Silverman]\label{sil}
If $\alpha \in \mathbb P^1(K)$ is not exceptional for $\phi$, then $\forb(\beta)$ contains at most finitely many points in $\mathbb P^1(K)$ which are $S$-integral relative to $\alpha$.
\end{thm}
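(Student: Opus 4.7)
The plan is to reduce to the $\alpha=\infty$ form of Silverman's Theorem recalled immediately before the statement by making a coordinate change on $\P^1$. Since $\alpha\in\P^1(K)$, I would choose any $f\in\mathrm{PGL}_2(K)$ with $f(\infty)=\alpha$ and set $\psi := f^{-1}\circ\phi\circ f$. Then $\psi$ is a rational map defined over $K$ of the same degree as $\phi$, and $\psi^n = f^{-1}\circ\phi^n\circ f$ for every $n\ge 0$. Consequently forward and backward orbits transform equivariantly:
\[
f^{-1}\!\bigl(\forb(\beta)\bigr)=\mathcal{O}_{\psi}^{+}\!\bigl(f^{-1}(\beta)\bigr),\qquad f^{-1}\!\bigl(\orb(\alpha)\bigr)=\mathcal{O}_{\psi}^{-}(\infty).
\]
The second identity shows that $\alpha$ is exceptional for $\phi$ exactly when $\infty$ is exceptional for $\psi$, so the hypothesis on $\alpha$ gives $\psi^2(z)\notin K[z]$.

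Next I would invoke the integrality transformation principle stated just before the theorem (and justified in Section \ref{integrality}): there is a finite set of places $S'\supseteq S$, depending only on $f$, such that a point $P\in\P^1(K)$ is $S$-integral relative to $\alpha$ for $\phi$ if and only if $f^{-1}(P)$ is $S'$-integral relative to $\infty$ for $\psi$. Applying the original form of Silverman's Theorem \cite{sil} to $\psi$ at the base point $f^{-1}(\beta)\in\P^1(K)$ then yields only finitely many $S'$-integers in $\mathcal{O}_{\psi}^{+}(f^{-1}(\beta))$. Pulling this back through $f$ using the orbit equivariance above gives the desired finiteness of $S$-integral points relative to $\alpha$ in $\forb(\beta)$.

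The one step requiring genuine verification is the integrality transformation: one must check that at each non-archimedean place $v$ the $v$-adic chordal distances $\delta_v(P,\alpha)$ and $\delta_v(f^{-1}(P),\infty)$ differ only by factors that are $v$-units outside a finite set of places determined by the reductions of $f$ and $f^{-1}$. This is a routine height-style calculation, and the finitely many exceptional places are absorbed into the enlargement $S'\supseteq S$. All of the substantive dynamical content of the theorem is thus concentrated in the classical $\alpha=\infty$ case of \cite{sil}, which is taken as given.
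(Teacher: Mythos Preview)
Your proposal is correct and is exactly the argument the paper has in mind: the paper does not give a separate proof environment for this theorem but derives it in the paragraph immediately preceding the statement by choosing a coordinate change $f$ of $\P^1$ sending $\infty$ to $\alpha$, noting (with reference to Section~\ref{integrality}) that after enlarging $S$ the notion of $S$-integrality relative to a point is invariant under such a change, and then invoking the classical $\alpha=\infty$ case from \cite{sil}. Your write-up simply makes the conjugation $\psi=f^{-1}\circ\phi\circ f$ and the orbit equivariance explicit, which the paper leaves implicit.
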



Since an exceptional point in analogous to a preperiodic point, Silverman's Theorem motivates the following conjecture for backward orbits.

\begin{conj}\label{conj}
If $\alpha \in \mathbb P^1(K)$ is not preperiodic for $\phi$, then $\orb(\beta)$ contains at most finitely many points in $\P^1(\overline K)$ which are $S$-integral relative to $\alpha$.
\end{conj}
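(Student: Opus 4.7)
The plan is to mirror Silverman's Theorem~\ref{sil} by transferring integrality along iterates of $\phi$ and then extracting a contradiction from the non-preperiodicity of $\alpha$. Assume toward contradiction that $\orb(\beta)$ contains an infinite sequence $\gamma_1,\gamma_2,\dots$ of points that are $S$-integral relative to $\alpha$, with $\phi^{n_i}(\gamma_i)=\beta$; after passing to a subsequence we may assume $n_i\to\infty$.

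The key step is a functional equation for the chordal metric under $\phi$. Outside a finite set $T$ of places of bad reduction for $\phi$, the quantity $-\log\delta_v(\phi(x),\phi(y))$ is controlled linearly by $-\log\delta_v(x,y)$. Iterating this bound $n_i$ times and using that $T$ is independent of $i$, the integrality hypothesis $\delta_w(\gamma_i,\alpha)=1$ at all $w\nmid (S\cup T)$ should transfer to a quantitative statement that $\beta=\phi^{n_i}(\gamma_i)$ is $v$-adically close to $\phi^{n_i}(\alpha)$ at every $v\notin S\cup T$. Because $\alpha$ is not preperiodic, Northcott's theorem forces $\hat h_\phi(\phi^{n_i}(\alpha))\to\infty$ at the rate $d^{n_i}$, so the sequence $\phi^{n_i}(\alpha)$ produces an infinite sequence of good Diophantine approximations to $\beta$ at the places of $S\cup T$. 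A Roth-type inequality, applied jointly to these places, ought to be incompatible with the approximation quality forced by integrality, yielding the contradiction.

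The main obstacle---and the reason the conjecture is stated only conditionally in the abstract---lies in the very notion of ``$S$-integral relative to $\alpha$'' for $\gamma\in\P^1(\overline K)$. Because $K(\gamma)/K$ can have arbitrarily large degree, the definition (as suggested by the abstract) involves all $\mathrm{Gal}(\overline K/K)$-conjugates of $\gamma$, and the transfer argument above is uniform in $n$ only when the number of Galois orbits in $\phi^{-n}(\beta)$ admits a bound independent of $n$. I would therefore split the proof into (a) a reduction showing that such a Galois-orbit bound implies the conjecture via the transfer argument above, and (b) establishment of the bound in tractable cases such as $\phi(z)=z^d$, where the splitting fields of $z^n-\beta$ are radical extensions and the orbit count can be controlled by Kummer theory provided $\beta$ is not a root of unity. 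Chebyshev polynomials should then follow by semiconjugation to the power map.
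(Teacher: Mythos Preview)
Your overall architecture matches the paper's: reduce Conjecture~\ref{conj} to a forward-orbit integrality statement via a transfer along $\phi$, invoke Silverman/Roth there, and handle the reduction by bounding the number of Galois orbits in $\phi^{-n}(\beta)$; then do $z^d$ by analyzing the factorization of $z^n-\beta$ and deduce Chebyshev by semiconjugation. That is exactly the paper's program (Theorem~\ref{cor2} and Theorem~\ref{bound} for the reduction, Lemma~\ref{lang} for $z^d$, and the final corollary for Chebyshev). Since the statement is a conjecture, neither you nor the paper proves it unconditionally.

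There is, however, a real gap in your transfer step. You write that $\delta_w(\gamma_i,\alpha)=1$ outside $S\cup T$ should force $\beta=\phi^{n_i}(\gamma_i)$ to be $v$-adically \emph{close} to $\phi^{n_i}(\alpha)$ at $v\notin S\cup T$; this is backwards, since $\delta_v=1$ is maximal distance, and in any case the one-sided inequality $\lambda_{P,v}(Q)\le\lambda_{\phi(P),v}(\phi(Q))$ (valid at good reduction) goes the wrong way to push integrality forward by naive iteration. The paper does not iterate an inequality at all: it uses the exact projection formula $\lambda_{P,v}(\phi(Q))=\lambda_{\phi^*(P),v}(Q)$ (Proposition~\ref{proj}, Corollary~\ref{cor1}), which shows $\beta$ is $S$-integral relative to $\phi^{n_i}(\alpha)$ \emph{only once you know the entire fiber} $(\phi^{n_i})^*(\beta)$ is $S$-integral relative to $\alpha$. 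That is precisely why the Galois-orbit hypothesis is not a side technicality of the $\overline K$-definition but the mechanism that makes the transfer work: if $\gamma_i$'s conjugates exhaust the fiber, then integrality of $\gamma_i$ gives integrality of the whole pullback divisor, and the projection formula converts this into $\phi^{n_i}(\alpha)$ being $S$-integral relative to a fixed point $\beta_{l,i}$, where Silverman's Theorem~\ref{sil} applies directly (no need to redo Roth). Your later paragraphs land in the right place, but the argument as written in your first two paragraphs would not go through without replacing the iterated inequality by the projection formula over the full fiber.
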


The main theorem of this paper is the following which immediately gives Conjecture \ref{conj} for the map $\phi(z)=z^d$.  By the functorial properties of relative $S$-integrally, Conjecture \ref{conj} will also be true for Chebyshev polynomials.

\begin{thm}\label{main-thm}
Suppose $\alpha \in K$ is not 0 or a root of unity.  Then there are at most finitely many points in $\{ \gamma \in \overline{K} \mid \gamma^n=\beta \}$ which are $S$-integral relative to $\alpha$.
\end{thm}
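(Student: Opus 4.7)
The plan is to combine the uniform bound on the number of Galois orbits of $z^n - \beta$ (the combinatorial heart of the paper, stated as a separate lemma) with equidistribution of algebraic numbers of small height.

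Assume $\beta \neq 0$, since otherwise the only solution is $\gamma = 0$. Suppose for contradiction that infinitely many $\gamma_i \in \overline{K}$ with $\gamma_i^{n_i} = \beta$ are $S$-integral relative to $\alpha$. Because $\{z : z^n = \beta\}$ has only $n$ elements for each fixed $n$, after passing to a subsequence we may assume $n_i \to \infty$, whence the Weil height satisfies $h(\gamma_i) = h(\beta)/n_i \to 0$. If $[K(\gamma_i):K]$ remains bounded, Northcott's theorem produces a contradiction immediately. Otherwise, passing to a further subsequence, $[K(\gamma_i):K] \to \infty$; the separate uniform bound on Galois orbits (available when $\beta$ is not a root of unity) shows that for $n$ large the largest Galois orbit in $\{z^n = \beta\}$ has size at least $n/C$, so this is the case that must genuinely be handled.

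With $h(\gamma_i) \to 0$ and $[K(\gamma_i):K] \to \infty$, I invoke Bilu's equidistribution theorem together with its non-archimedean extensions (Baker--Rumely, Chambert-Loir): the normalized counting measure on the $\mathrm{Gal}(\overline{K}/K)$-orbit of $\gamma_i$ converges weakly to the equilibrium measure $\mu_v$ of $\phi(z) = z^d$ at every place $v$ of $K$. For this $\phi$ the measure is Haar measure on $\{|z|_v = 1\}$ at archimedean $v$, and the corresponding Berkovich analogue at non-archimedean $v$. Since $\alpha \in K^*$ has only finitely many zeros and poles, one can choose a non-archimedean $v \notin S$ with $|\alpha|_v = 1$; at such $v$, the point $\alpha$ lies in the support of $\mu_v$. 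Galois invariance of $S$-integrality forces $\delta_v(\sigma(\gamma_i),\alpha) = 1$ for every conjugate $\sigma(\gamma_i)$, and the goal is that this, combined with equidistribution, yields a contradiction through a log-potential comparison at $v$.

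The main obstacle is precisely the non-archimedean step. Archimedean places all lie in $S$ and so impose no $S$-integrality constraint, while non-archimedean equidistribution in Berkovich space is a priori compatible with the classical Galois conjugates staying bounded away from the single point $\alpha$. Closing the gap demands a quantitative strengthening---either Bilu-type equidistribution tested against a log-singular function, or direct bounds on the minimal polynomials $m_{\gamma_i}(\alpha)$ via the product formula together with a Baker-style lower bound on $|\alpha^n - \beta|_v$ that exploits $\alpha$ being neither zero nor a root of unity. The case where $\beta$ is itself a root of unity falls outside the uniform Galois-orbit bound and must be treated separately, most naturally by invoking the classical equidistribution of roots of unity.
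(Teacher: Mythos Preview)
Your proposal has a genuine gap, which you yourself identify: weak equidistribution of Galois orbits in Berkovich space tests only against continuous functions, and the local height $-\log\delta_v(\cdot,\alpha)$ has a logarithmic singularity at $\alpha$. At a non-archimedean $v\notin S$ with $|\alpha|_v=1$, it is perfectly consistent with Berkovich equidistribution that every classical conjugate $\sigma(\gamma_i)$ satisfies $\delta_v(\sigma(\gamma_i),\alpha)=1$; the mass accumulates on the Gauss point, not on $\alpha$. Upgrading to a contradiction requires exactly the kind of quantitative input you allude to---a lower bound on $|\alpha^n-\beta|_v$ coming from linear forms in ($p$-adic) logarithms---and at that point the equidistribution framework is doing no real work. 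The paper in fact remarks that one \emph{could} push through the Baker--Ih--Rumely style argument in the non-root-of-unity case, but deliberately avoids it.

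What the paper does instead is both different and more elementary. The structural lemma gives more than a uniform \emph{count} of Galois orbits: it shows that over a fixed finite extension $L$, every irreducible factor of $z^n-\beta$ has the specific shape $z^m-\beta_i$ with $\beta_i$ drawn from a fixed finite set $D\subset L$. For such a factor, the identity
\[
|\alpha^m-\beta_i|_v=\prod_{\sigma}|\alpha-\sigma(\gamma)|_v
\]
(where $\sigma$ runs over embeddings of $K(\gamma)$) converts ``$\gamma$ is $S$-integral relative to $\alpha$'' into ``$\alpha^m$ is $S$-integral relative to $\beta_i$''. This is now a \emph{forward}-orbit statement about the cyclic group $\{\alpha^m\}$, and Siegel's theorem on $\mathbb{G}_m$ (three points at infinity) finishes it immediately. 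So the key idea you are missing is not a sharper analytic estimate but the algebraic reduction: use the explicit form of the irreducible factors, not merely their number, to pull back to Siegel's theorem. Your outline only extracts the orbit count to force $[K(\gamma_i):K]\to\infty$, discarding precisely the structure that makes the short proof work.
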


It is important to note that unlike Theorem \ref{sil}, Conjecture \ref{conj} and Theorem \ref{main-thm} are integrality statements over $\overline{K}$. The definition for when $\gamma \in \overline{K}$ is $S$-integral relative to $\alpha \in K$ must not depend on how $\gamma$ embeds into $\overline{K}$.  Therefore, it will useful to know something about the Galois orbits for $\phi^n(z)-\beta$ in proving our conjecture.  In fact, Theorem 1.3$^{\,\prime}$ below, which is an immediate consequence of Lemma \ref{lang}, bounds the number of Galois orbits for $z^n-\beta$ and this is enough to give Theorem \ref{main-thm}.

\begin{thm1.3}  Suppose $\beta \in K$ is not 0 or a root of unity.  Then the number of Galois orbits for $z^n-\beta$ is bounded by a constant independent of $n$.
\end{thm1.3}


To see how to utilize the connection between Galois orbits and relative $S$-integrality in $\overline{K}$, suppose the points in $\phi^{-n}(\beta)$ are all Galois conjugates for each $n$.  Then the projection formula (see Section \ref{integrality}) translates Conjecture \ref{conj} into a statement about forward orbits.  This will consequently give a proof via Silverman's Theorem.  Since we cannot expect all the points in $\phi^{-n}(\beta)$ to be Galois conjugates, a more plausible hypothesis is considered in Theorem \ref{cor2}.  One way this hypothesis can be satisfied is to show that the number of Galois orbits for $\phi^{-n}(\beta)$ is bounded by a constant independent of $n$.  This sort of bound was established by R. Jones \cite{rafe} for certain types of quadratic polynomials with $\beta = 0$.  More generally, it is shown in Section \ref{dl} that when $\beta$ is not preperiodic for $\phi$, the Dynamical Lehmer's Conjecture implies such a bound on the number of Galois orbits for $\phi^{-n}(\beta)$, and therefore gives Conjecture \ref{conj} in this case.



There are strong similarities between $\orb(\beta)$ and $\mbox{PrePer}(\phi, \overline K)$.  It has been similarly conjectured that $\mbox{PrePer}(\phi, \overline K)$ contains finitely many points which are $S$-integral relative to a non-preperiodic point $\alpha$ of $\phi$.  This conjecture of S. Ih has been proven for the map $\phi(z)=z^d$ with $d\ge2$ by Baker-Ih-Rumely \cite{bir}.   More recently, C. Petsche \cite{clay} has shown Ih's Conjecture is true when the non-preperiodic point $\alpha$ is totally Fatou for $\phi$.

Both sets also share similar equidistribution properties. Lyubich \cite{lyu} has shown that the points in $\orb(\beta)$ and $\mbox{PrePer}(\phi)$ are equidistributed with respect to the Haar measure on $\mathbb P^1(\mathbb C)$.  Later, Baker-Rumely \cite{br} and C. Favre and J. Rivera-Letelier \cite{FRL} extended Lyubich's result to any set of points $P_n\in \mathbb P^1(\overline{K})$ with $\hat{h}_{\phi}(P_n) \lra 0$.  Chambert-Loir \cite{CL} has also proven analogous equidistribution results for such sequences points on certain elliptic curves.  However, one cannot expect to have integrality results for general families of points with canonical height tending to zero. For example, let $K =\mathbb Q$, $\phi(z)=z^2$, $S=\{\infty\}$ and $\alpha = 2$.  In \cite{bir}, it was shown that if $\beta_n$ is a root of the polynomial $f_n(z)=z^{2^{n}}(z-2)-1$, then $\hat{h}_{\phi}(\beta_n) \lra 0$ and each $\beta_n$ is $S$-integral relative to $\alpha$.

\emph{Acknowledgement.}  The author would like to thank T. Tucker, M. Zieve, and the referee for useful comments, suggestions, and corrections.

\section{Height, Relative $S$-Integrality, and Preliminary Results}

\subsection{Heights}{\label{heights}}
Let $M_{\mathbb Q}$ be the set consisting of the usual archimedean absolute value on $\mathbb Q$, along with the $p$-adic absolute values normalized so that $|p|_p=1/p$.  For a number field $K$, $M_K$ will denote the set of normalized inequivalent absolute values constructed from $M_{\mathbb Q}$ in the following manner:  Write $K_v$ for the completion of $K$ at the place $v$ and define $$|\alpha|_v = |N_{K_v/\mathbb Q_p}(\alpha)|_p^{1/[K:\mathbb Q]}$$ for $\alpha \in K$ and the place $v$ lying over $p$.  This normalization gives the product formula $$\prod_{v\in M_K}|\alpha|_v = 1.$$

For $\beta = (\beta_1: \beta_2) \in \mathbb P^1(\overline{K})$, where $\beta_1, \beta_2 \in L$, we define the \emph{absolute logarithmic height} as $$h(\beta) =  \sum_{v \in M_L} \log{\max \{|\beta_1|_v, |\beta_2|_v\}}.$$ This definition is independent of the choice of the field $L$ containing $\beta_1$ and $\beta_2$, and by the product formula, it is also independent of the choice of projective coordinates for $\beta$.  If $\beta = \beta_1/\beta_2 \in \mathbb Q$ with $\beta_1$ and $\beta_2$ relatively prime, then $h(\beta) = \log{\max\{|\beta_1|,|\beta_2|\}}$ and can be used to bound the maximum number of digits needed to write $\beta$.  Therefore,  one may think of the height as measuring the ``arithmetic complexity" of an algebraic number.

An often useful property of the logarithmic height is given by Northcott's Theorem.
\begin{thm}[Northcott]\label{Northcott}
Any set of points of bounded height and bounded degree in $\mathbb P^1(\overline K)$ is finite.
\end{thm}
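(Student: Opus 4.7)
The plan is to reduce the statement to the classical fact that for any real $B$ and any integer $D\ge 1$, only finitely many $\beta\in\overline{\mathbb Q}$ satisfy $[\mathbb Q(\beta):\mathbb Q]\le D$ and $h(\beta)\le B$. Since $\mathbb P^1(\overline K)$ differs from $\mathbb A^1(\overline K)$ only by the single point $\infty=(1:0)$, and since a bound on $[K(\beta):K]$ gives a bound on $[\mathbb Q(\beta):\mathbb Q]$ after multiplication by $[K:\mathbb Q]$, it suffices to treat affine algebraic numbers of bounded height and bounded degree over $\mathbb Q$.

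First I would record that $h$ is Galois-invariant on $\overline{\mathbb Q}$, so that all conjugates of a given $\beta$ share the same height: any $\sigma\in\mathrm{Gal}(\overline{\mathbb Q}/\mathbb Q)$ permutes the places of a Galois closure containing $\beta$ compatibly with the normalized absolute values. Next I would attach to $\beta$ its primitive minimal polynomial
\[
F(x)\;=\;a_d x^d+\cdots+a_0\;\in\;\mathbb Z[x],\qquad a_d>0,\ d=[\mathbb Q(\beta):\mathbb Q],
\]
and establish the Mahler-measure identity
\[
d\,h(\beta)\;=\;\log M(F),\qquad M(F)\;:=\;|a_d|\prod_{i=1}^{d}\max\bigl(1,|\beta^{(i)}|_\infty\bigr),
\]
where $\beta^{(1)},\dots,\beta^{(d)}$ are the complex embeddings of $\beta$. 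This identity is derived from the definition of $h$ by splitting the sum $h(\beta)=\sum_{v\in M_L}\log^+|\beta|_v$ over $L=\mathbb Q(\beta)$ into archimedean and non-archimedean parts, unpacking the normalizations (so that the archimedean contributions assemble into $\tfrac{1}{d}\sum_i\log^+|\beta^{(i)}|_\infty$), and using primitivity of $F$ (so that the $v$-adic Gauss norm of $F$ is $1$ at every finite place $v$) to package the non-archimedean contributions into $\tfrac{1}{d}\log|a_d|$.

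Given the identity, a standard estimate on elementary symmetric functions produces
\[
|a_j|\;\le\;\binom{d}{j}M(F)\;\le\;2^{D'}e^{D'B},\qquad D':=D[K:\mathbb Q],
\]
for every coefficient $a_j$, uniformly in $\beta$. Since there are only finitely many polynomials in $\mathbb Z[x]$ of degree at most $D'$ with coefficients bounded in absolute value by this quantity, and each such polynomial has at most $D'$ roots, the set of candidates for $\beta$ is finite, which gives the theorem. The main obstacle is the Mahler-measure identity, which requires careful bookkeeping of local contributions under the normalization $|\alpha|_v=|N_{K_v/\mathbb Q_p}(\alpha)|_p^{1/[K:\mathbb Q]}$; once that is in hand the combinatorial conclusion is routine, so in writing up the proof I would devote most of the effort to getting the archimedean and non-archimedean accounting exactly right.
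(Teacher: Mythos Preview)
Your argument is correct and is essentially the standard proof of Northcott's Theorem via the Mahler measure identity $d\,h(\beta)=\log M(F)$; the only thing I would watch is the normalization: in this paper $|\cdot|_v$ already carries the factor $[K_v:\mathbb Q_p]/[K:\mathbb Q]$ in the exponent, so when you unpack $h(\beta)$ over $L=\mathbb Q(\beta)$ the local degrees are already built in and you get $\tfrac{1}{d}\sum_i\log^+|\beta^{(i)}|$ and $\tfrac{1}{d}\log|a_d|$ directly, as you claim.

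As for comparison with the paper: the paper does not give a proof at all---it simply cites \cite[Th.~1.6.8]{hdg}. So you have done considerably more than what the paper does. Your sketch follows the classical route (minimal polynomial, Mahler measure, coefficient bounds), which is in fact the proof one finds behind such citations; there is no alternative argument in the paper to compare against.
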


\begin{proof}
See \cite[Th. 1.6.8.]{hdg}
\end{proof}

Northcott's Theorem implies that $\orb(\beta)$ will contain finitely many points in any fixed number field $L$.  When $\phi$ is a polynomial, this means that the irreducible factors of $\phi^n(z)-\beta$ over $K$ will have degrees growing larger with $n$.  More generally, it will be shown in Section \ref{dl} that the Dynamical Lehmer's Conjecture implies that the number of irreducible factors is bounded by a constant independent of $n$.

\begin{cor}\label{fin}
For any $\beta \in \mathbb P^1(K)$, $\orb(\beta)$ contains finitely many points in any finite extension $L$ of $\Q$.
\end{cor}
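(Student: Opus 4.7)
The plan is to invoke Northcott's Theorem (Theorem \ref{Northcott}). Fix a finite extension $L/\mathbb Q$. Every $\gamma \in \orb(\beta) \cap \P^1(L)$ automatically satisfies $[\mathbb Q(\gamma):\mathbb Q] \le [L:\mathbb Q]$, so the degree is uniformly bounded. It therefore suffices to show that the heights $h(\gamma)$, as $\gamma$ ranges over $\orb(\beta)$, are bounded by a constant depending only on $\phi$ and $\beta$.

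To obtain the height bound, I would use the standard functorial estimate for the Weil height under a rational map of degree $d = \deg \phi \ge 2$: there is a constant $C = C(\phi)$ such that
\[
\bigl| h(\phi(P)) - d \cdot h(P) \bigr| \le C \qquad \text{for all } P \in \P^1(\overline K).
\]
From $h(\phi(P)) \ge d\, h(P) - C$, I rearrange to get $h(P) \le \bigl(h(\phi(P)) + C\bigr)/d$. Iterating this inequality $n$ times for $\gamma \in \phi^{-n}(\beta)$ yields
\[
h(\gamma) \le \frac{h(\beta)}{d^n} + C\sum_{k=1}^{n} \frac{1}{d^k} \le h(\beta) + \frac{C}{d-1}.
\]
Thus $h(\gamma)$ is bounded independently of $n$ and of $\gamma \in \orb(\beta)$. (An equivalent route is to observe that the canonical height satisfies $\hat h_\phi(\gamma) = \hat h_\phi(\beta)/d^n$, and then use $|h - \hat h_\phi| = O(1)$.)

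Combining the two bounds, the set $\orb(\beta) \cap \P^1(L)$ consists of points of degree at most $[L:\mathbb Q]$ and height at most $h(\beta) + C/(d-1)$, so Northcott's Theorem forces this set to be finite. There is no real obstacle here; the only mild care needed is in iterating the telescoping height inequality to ensure the constant on the right-hand side is genuinely independent of the depth $n$ of the preimage, which the geometric series $\sum d^{-k}$ provides since $d \ge 2$.
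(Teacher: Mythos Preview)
Your proof is correct and follows essentially the same approach as the paper: both use the functorial height estimate $h(\phi(P)) = d\,h(P) + O(1)$, iterate it to bound $h(\gamma)$ uniformly for $\gamma \in \orb(\beta)$, and then apply Northcott's Theorem. You are simply more explicit about the telescoping geometric series that controls the accumulated error.
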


\begin{proof}
If $\gamma \in \orb(\beta)$, then $\phi^n(\gamma)=\beta$ for some $n$.  By functoriality, we have $h(\phi(\alpha)) = dh(\alpha)+O(1)$ \cite[Th. 3.11]{ADS}.  This gives $d^n h(\gamma) + O(1+d+ \dots +d^{n-1}) = h(\beta) $ which implies $h(\gamma)$ is bounded.  So $\orb(\beta) \cap L$ is a set of bounded height and degree, and therefore finite by Northcott's Theorem.
\end{proof}

\subsection{$S$-integrality}{\label{integrality}}  Let $S$ be a finite set of places of $K$ containing all the archimedean places, and define the $v$-adic chordal metric on $\mathbb P^1{(\mathbb C_v)}$ as $$\delta_v(P, Q) = \frac{|x_1y_2 - y_1x_2|_v}{\max\{|x_1|_v, |y_1|_v\}  \max\{|x_2|_v, |y_2|_v\}}$$ where $P=[x_1:y_1]$ and $Q=[x_2:y_2]$.  Since $0\le \delta_v(\cdot, \cdot) \le 1$, we can view $\mathcal O_{K,S}$ as the set points $\gamma \in K$ whose $v$-adic chordal distance to $\infty$ is maximal for all $v \not\in S$; that is, $|\gamma|_v\le1$ if and only if $\delta_v(P, \infty)=1$ where $P=[\gamma:1]$.  This geometric view of an $S$-integer allows a generalization to $\overline K$ by allowing $P$ to vary over the embeddings of $\gamma$ in $\overline K$, and by replacing $\infty$ with some arbitrary point.



To state the definition in terms of local heights (see \cite[Ch. 3]{ADS}), define $\lambda_{P,v}(Q) = -\log\delta_v(P, Q)$ and let $\alpha, \beta \in \mathbb P^1(\overline{K})$.  Then we say \emph{$\beta$ is S-integral relative to $\alpha$} if and only if $\lambda_{P,v}(Q) = 0$ for all $v\not\in S$, and for all $P$ and $Q$ varying over the respective $K$-embeddings of $\alpha$ and $\beta$ in $\mathbb P^1(\mathbb C_v)$.  More generally, for the divisor $D=\sum n_iP_i$ on $\mathbb P^1 (\mathbb C_v)$, define $\lambda_{D,v}(Q)= \sum n_i\lambda_{P_i,v}(Q)$ and let $\alpha_i, \beta \in \mathbb P^1(\overline{K})$.  Then $\beta$ is S-integral relative to $\Delta=\sum n_i\alpha_i$ if and only if $\lambda_{D,v}(Q)=0$ for all $v\not\in S$, and for all $P_i$ and $Q$ varying over the respective $K$-embeddings of $\alpha_i$ and $\beta$ in $\mathbb P^1(\mathbb C_v)$.

Restricting to affine coordinates by identifying $\overline{K}$ with the points $[x:1]\in \mathbb P^1(\overline{K})$, our definition becomes: $\beta \in \overline{K}$ is S-integral relative to $\alpha \in \overline{K}$ if and only if, for all $v \not\in S$ and $\sigma, \tau \in  \mbox{Gal}(\overline{K}/K)$,  $$\begin{array}{ll} |\sigma(\beta) - \tau(\alpha)|_v\ge 1 &\mbox{ if $|\tau(\alpha)|_v \le 1$} \\ |\sigma(\beta)|_v\le 1 &\mbox{ if $|\tau(\alpha)|_v > 1$ }.
\end{array} $$
For example, take $K=\Q$ and $S=\{ \infty, 2 \}$.  Then the points $\beta \in \Q$ which are $S$-integral relative to $\infty$ are those point whose denominator may only be divisible by 2.  Similarly, the points $\beta \in \Q$ which are $S$-integral relative to $0$ are those point whose numerator may only be divisible by 2.

After possibly enlarging $S$, this definition is independent of coordinate change on $\mathbb P^1(\overline K)$.  To see this, suppose  $f([X:Y])=[aX+bY:cX+dY]$, with $ad-bc\not=0$, is a linear fractional transformation defined over $K$.  Let $R_v=\{x \in K \mid |x|_v \le 1 \}$ be the valuation ring for $v$ in $K$, and extend $S$ so that $a,b,c,d \in R_v$ and $ad-bc \in R_v^*$ for all $v\not\in S$.  Then \cite[Lem. 2.5]{ADS} implies $\lambda_{f(P),v}(f(Q))=\lambda_{P,v}(Q)$ for all $P,Q\in \mathbb P^1(\overline K)$ and $v\not\in S$.

\subsection{Good Reduction}
Let $v\in M_K$ be a non-archimedean absolute value, $P=[x:y]\in \mathbb P^1(\overline K)$, and $\phi=[F(X,Y):G(X,Y)]$ a rational map defined over $K$ with $f_1,\dots,f_n$ and $g_1,\dots,g_m$ the coefficients of $F(X,Y)$ and $G(X,Y)$, respectively.  We say $P$ and $\phi$ are written in \emph{normalized form} if $\max(|x|_v,|y|_v)=1$ and $\max(|f_1|_v,\dots,|f_n|_v,|g_1|_v,\dots,|g_m|_v)=1$.

Let $R_v=\{x \in K \mid |x|_v \le 1 \}$ be the valuation ring for $v$, $\mathfrak{m}_v=\{x\in K \mid |x|_v=1 \}$ be its maximal ideal, and $\kappa_v =
R_v/\mathfrak{m}_v$ be its residue field.  For $x\in R_v$, we say $\widetilde{x}$, the image of $x$ under the homomorphism $R_v \rightarrow \kappa_v$, is the reduction of $x$ modulo $\mathfrak{m}_v$.  Writing $\phi=[F(X,Y):G(X,Y)]$ in normalized form, we let $\widetilde{\phi}$ be the rational map obtained by reducing the coefficients of $F(X,Y)$ and $G(X,Y)$ modulo $\mathfrak{m}_v$.  The map $\phi$ is said to have \emph{good reduction} at $v$ if $\deg(\phi)=\deg(\widetilde{\phi})$, and \emph{bad reduction} at $v$ otherwise.

Using the Taylor expansion for $\phi(z)=F(z)/G(z)$ around $z=\alpha$, the multiplicity (or ramification) of $\alpha$ at $\phi$ is $e_{\alpha}$ where $\phi(z)-\phi(\alpha)=c(z-\alpha)^{e_{\alpha}} + O((z-\alpha)^{e_{\alpha}+1})$.  For $\phi^{-1}(\beta)=\{\beta_1,\beta_2,\dots,\beta_l \}$, we define the divisor $\phi^*(\beta) = \sum n_i\beta_i$ where $n_i$ is the multiplicity of $\beta_i$ at $\phi(z)-\beta$.

The projection formula, given in the next proposition, tells us that our integrality definition behaves well functorially.  More specifically, if $S$ contains all the places of bad reduction for $\phi$, then $\beta$ is $S$-integral relative to $\phi(\alpha)$ if and only if $\phi^*(\beta)$ is $S$-integral relative to $\alpha$.

\begin{prop}[Projection Formula]\label{proj}  Suppose $v$ is a place of good reduction for $\phi$ and $P,Q \in \mathbb P^1(\overline K) $.  Then $\lambda_{P,v}(\phi(Q))=\lambda_{\phi^*(P),v}(Q)$.
\end{prop}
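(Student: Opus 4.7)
The plan is to unwind both sides directly from the definition of the chordal metric and then invoke good reduction twice.  Write $\phi = [F(X,Y):G(X,Y)]$ as a pair of binary forms of degree $d = \deg\phi$ in normalized form at $v$, and choose representatives $P = [a:b]$ and $Q = [x:y]$ in normalized form, so that $\max(|a|_v,|b|_v) = \max(|x|_v,|y|_v) = 1$.  By the chordal metric formula,
\[
\delta_v\bigl(P,\phi(Q)\bigr) \;=\; \frac{|a\,G(x,y) - b\,F(x,y)|_v}{\max(|a|_v,|b|_v)\,\cdot\,\max(|F(x,y)|_v,|G(x,y)|_v)}.
\]

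The key observation is that the binary form $bF(X,Y) - aG(X,Y)$ is homogeneous of degree $d$ and vanishes precisely on the $\phi$-preimages of $P$, so over $\overline K$ one can factor
\[
bF(X,Y) - aG(X,Y) \;=\; c\prod_i (\beta_i X - \alpha_i Y)^{n_i},
\]
with normalized representatives $P_i = [\alpha_i:\beta_i]$ for the points of $\phi^{-1}(P)$ and $n_i$ their ramification indices, so that $\phi^*(P) = \sum n_i P_i$ and $\sum n_i = d$.  Two facts from good reduction then do all the work.  First, by the Gauss-type product rule for the non-archimedean content of binary forms and the fact that each linear factor has content $1$, $|c|_v$ equals the content of $bF-aG$; this content is $\le 1$, and if it were $<1$ then modulo $\mathfrak m_v$ we would have $\widetilde b\,\widetilde F = \widetilde a\,\widetilde G$, forcing $\widetilde F$ and $\widetilde G$ to share a root and contradicting $\deg\widetilde\phi = d$; hence $|c|_v = 1$.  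Second, the same good reduction hypothesis implies $\max(|F(x,y)|_v,|G(x,y)|_v) = 1$, the standard statement that $\phi$ sends normalized points to normalized points, which follows from $\mathrm{Res}(F,G)$ being a $v$-adic unit exactly when $\phi$ has good reduction.

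Combining these, the denominator in the expression for $\delta_v(P,\phi(Q))$ equals $1$ and the numerator factors as $\prod_i |\alpha_i y - \beta_i x|_v^{n_i}$, so
\[
\delta_v\bigl(P,\phi(Q)\bigr) \;=\; \prod_i \frac{|\alpha_i y - \beta_i x|_v^{n_i}}{\max(|\alpha_i|_v,|\beta_i|_v)^{n_i}\,\max(|x|_v,|y|_v)^{n_i}} \;=\; \prod_i \delta_v(P_i,Q)^{n_i}.
\]
Taking $-\log$ yields $\lambda_{P,v}(\phi(Q)) = \sum_i n_i\,\lambda_{P_i,v}(Q) = \lambda_{\phi^*(P),v}(Q)$, as desired.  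The main obstacle is the two consequences of good reduction used above; both ultimately reduce to the classical fact that good reduction for $\phi$ at $v$ is equivalent to the resultant of a normalized model $[F:G]$ being a $v$-adic unit, so the proof really amounts to verifying this resultant characterization and then tracking normalized forms carefully through the computation.
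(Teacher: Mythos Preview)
Your proof is correct and follows essentially the same route as the paper's: normalize $P$, $Q$, and $\phi$, factor the binary form $aG-bF$ into linear factors corresponding to the points of $\phi^{-1}(P)$, and use good reduction to see both that $\phi(Q)$ stays normalized and that the factored form has unit content, so the chordal distances multiply as claimed. The only cosmetic difference is that you track an explicit constant $c$ and argue $|c|_v=1$ via the common-root contradiction, whereas the paper absorbs this into Gauss's lemma together with the observation $\widetilde H\neq 0$; these are the same argument.
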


\begin{proof}
Write $P=[a:b]$, $Q=[x_1:y_1]$, and $\phi=[F(X,Y):G(X,Y)]$ in normalized form. Since $\phi$ has good reduction at $v$, $\phi(Q)=[F(x_1,y_1):G(x_1,y_1)]$ is also in normalized form and $$\delta_v(P,\phi(Q))=|aG(x_1,y_1)-bF(x_1,y_1)|_v.$$

Consider the homogenous polynomial $H(X,Y)=aG(X,Y)-bF(X,Y) \in R'_v[X,Y]$, where $R'_v$ is the ring of integers of $K'$, the splitting field for $H$.  By Gauss's lemma \cite[lem. 1.6.3]{hdg}, we may factor $$H(X,Y)=\prod_{i=1}^d (\beta_iX-\alpha_iY)^{n_i} $$ with $\alpha_i, \beta_i \in R'_v$.  Now
$\widetilde{H}\not=0$ since $\phi$ has good reduction at $v$ and $\max(|a|_v,|b|_v)=1$.  Therefore, $\max(|\alpha_i|_v,|\beta_i|_v)=1$ and the points $P_i=[\alpha_i:\beta_i]$ are written in normalized form.  This gives $\delta_v(P,\phi(Q))=\prod_{i=1}^d \delta_v(P_i,Q)^{n_i}$, and since $H(P_i)=0$
if and only if $P_i\in \phi^{-1}(P)$ with multiplicity $n_i$, taking logarithms give $$ \lambda_{P,v}(\phi(Q))=\sum_{i=1}^d n_i \lambda_{P_i,v}(Q)=\lambda_{\phi^*(P),v}(Q).$$
\end{proof}

\begin{cor}\label{cor1}
Suppose $\phi$ has good reduction for all places $v\not\in S$.  Then $\beta$ is $S$-integral relative to $\phi(\alpha)$ if and only if  $\phi^{*}(\beta)$ is $S$-integral relative to $\alpha$.
\end{cor}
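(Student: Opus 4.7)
\emph{Plan.} The corollary is an unpacking of the Projection Formula (Proposition \ref{proj}) together with the definition of relative $S$-integrality; the only work is in matching up quantifiers over Galois conjugates. I would proceed as follows.

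First, I would record the interaction of $\phi$ and $\phi^*$ with $\mbox{Gal}(\overline K/K)$. Because $\phi$ is defined over $K$, the Galois conjugates of $\phi(\alpha)$ are precisely the points $\phi(\sigma(\alpha))$ for $\sigma \in \mbox{Gal}(\overline K/K)$, and if $\phi^*(\beta)=\sum_i n_i\beta_i$ then applying any $\tau \in \mbox{Gal}(\overline K/K)$ gives $\phi^*(\tau(\beta))=\sum_i n_i\tau(\beta_i)$, with the same multiplicities (since $\tau$ is a $K$-automorphism and $\phi(z)-\tau(\beta)=\tau\bigl(\phi(z)-\beta\bigr)$ up to conjugation).

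Next, I would fix $v\notin S$ (so $\phi$ has good reduction at $v$) and fix arbitrary $\sigma,\tau\in \mbox{Gal}(\overline K/K)$, and apply Proposition \ref{proj} with $P=\tau(\beta)$ and $Q=\sigma(\alpha)$. Since $\delta_v$ is symmetric, so is $\lambda_{\cdot,v}(\cdot)$, and the projection formula becomes
$$
\lambda_{\phi(\sigma(\alpha)),v}(\tau(\beta))\;=\;\lambda_{\tau(\beta),v}(\phi(\sigma(\alpha)))\;=\;\lambda_{\phi^*(\tau(\beta)),v}(\sigma(\alpha))\;=\;\sum_i n_i\,\lambda_{\tau(\beta_i),v}(\sigma(\alpha)).
$$
Because each $\lambda_{\cdot,v}(\cdot)\ge 0$ and each $n_i\ge 1$, the left side vanishes if and only if every summand on the right vanishes.

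Finally, I would quantify over $v\notin S$ and over $\sigma,\tau$. The vanishing of the left side for all such choices is exactly the statement that $\beta$ is $S$-integral relative to $\phi(\alpha)$; the vanishing of every summand on the right for all such choices is exactly the statement (per the divisor version of the definition in Section \ref{integrality}) that the divisor $\phi^*(\beta)=\sum_i n_i\beta_i$ is $S$-integral relative to $\alpha$. This gives the biconditional. The only place any care is needed is in step one: ensuring that the Galois conjugates of the divisor $\phi^*(\beta)$ and of the point $\phi(\alpha)$ line up with the arguments to which the projection formula is being applied, which is precisely where the hypothesis that $\phi$ is defined over $K$ enters.
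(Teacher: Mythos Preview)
Your proposal is correct and follows essentially the same route as the paper's proof: both note that as the Galois embeddings of $\alpha$ and $\beta$ vary, those of $\phi(\alpha)$ and $\phi^*(\beta)$ vary correspondingly (because $\phi$ is defined over $K$), and then invoke the projection formula at each $v\notin S$. You spell out two points the paper leaves implicit---the use of the symmetry $\lambda_{P,v}(Q)=\lambda_{Q,v}(P)$ to put things in the form of Proposition~\ref{proj}, and the fact that $\phi^*(\beta)$ is effective with $\lambda_{\cdot,v}\ge 0$ so the sum vanishes iff each term does---but the argument is the same.
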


\begin{proof}
If $P$ and $Q$ vary over all the respective embeddings of $\alpha$ and $\beta$ into $\mathbb P^1(\mathbb C_v)$, then $\phi(P)$ and $\phi^*(Q)$ also vary over all the respective embeddings of $\phi(\alpha)$ and $\phi^{*}(\beta)$ into $\mathbb P^1(\mathbb C_v)$.  Since $\phi$ has good reduction for all $v\not\in S$, the projective formula $\lambda_{P,v}(\phi(Q))=\lambda_{\phi^*(P),v}(Q)$ gives the desired result.
\end{proof}

If $S$ is enlarged so that the resultant of $\phi$, $\mbox{Res}(\phi)$, is an $S$-unit, then $\lambda_{P,v}(Q)\le \lambda_{\phi(P),v}(\phi(Q))$ \cite[Th. 2.14]{ADS}.  The would imply a weaker conclusion than Corollary \ref{cor1}:  If $\beta$ is $S$-integral relative to $\alpha$, then the points in $\phi^{-1}(\beta)$ are $S$-integral relative to $\phi^*(\alpha)$ .  The definition of relative $S$-integrality can be slightly modified so that it behaves well under pullbacks without any restriction on $S$.

Corollary \ref{cor1} can be used to rephrase an integrality statement about backward orbits into an integrality statement about forwards orbits.  However,  some conditions on the Galois orbits of points in $\orb$ will be needed since relative $S$-integrality in $\P^1(\overline{K})$ is defined with respect to all possible embeddings $K\hookrightarrow \overline{K}$.  This rephrasing can give Conjecture \ref{conj} via Silverman's Theorem for $\forb(\beta)$.


\begin{thm}\label{cor2}
For any rational map $\phi$, Conjecture \ref{conj} is true provided there exists an $l$ such that for each $\beta_{l,i}\in\phi^{-l}(\beta)$ and each $m\ge 0$, the points in $\phi^{-m}(\beta_{l,i})$ are all Galois conjugates over $K$.
\end{thm}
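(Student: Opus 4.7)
The plan is to translate the backward-orbit integrality statement into a forward-orbit one via the projection formula, then invoke Silverman's Theorem (Theorem \ref{sil}). First observe that $\orb(\beta) = \bigcup_{k<l}\phi^{-k}(\beta) \cup \bigcup_i \orb(\beta_{l,i})$, with the first union finite by Corollary \ref{fin}, so it suffices to show that for each $\beta_{l,i}\in \phi^{-l}(\beta)$ the set $\orb(\beta_{l,i})$ contains only finitely many points which are $S$-integral relative to $\alpha$. Fix such a $\beta_{l,i}$, and replace $K$ by $K' = K(\beta_{l,i})$ and $S$ by the set $S'$ of places of $K'$ lying over $S$, enlarging $S'$ further if needed so that $\phi$ has good reduction at all $v\notin S'$. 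The hypothesis then becomes the statement that $\phi^{-m}(\beta_{l,i})$ is a single $\mbox{Gal}(\overline{K'}/K')$-orbit for every $m\ge 0$.

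Next, translate the integrality condition to a forward-orbit statement. Since $\mbox{Gal}(\overline{K'}/K')$ permutes the elements of $\phi^{-m}(\beta_{l,i})$ transitively, they all share a common ramification index $n_m$ under $\phi^m$, and hence $(\phi^m)^{*}(\beta_{l,i}) = n_m \sum_{\gamma\in\phi^{-m}(\beta_{l,i})} [\gamma]$. Because the $K'$-embeddings of any $\gamma\in\phi^{-m}(\beta_{l,i})$ range over the entire set $\phi^{-m}(\beta_{l,i})$ and each local height is nonnegative, the statement that some (equivalently, every) such $\gamma$ is $S'$-integral relative to $\alpha$ is equivalent to $\lambda_{(\phi^m)^{*}(\beta_{l,i}),v}(\alpha)=0$ for all $v\notin S'$. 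Iterating Proposition \ref{proj} $m$ times gives $\lambda_{(\phi^m)^{*}(\beta_{l,i}),v}(\alpha)=\lambda_{\beta_{l,i},v}(\phi^m(\alpha))$, so this is in turn equivalent to $\phi^m(\alpha)$ being $S'$-integral relative to $\beta_{l,i}$.

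Now suppose $\orb(\beta_{l,i})$ contains infinitely many points which are $S$-integral relative to $\alpha$ (hence also $S'$-integral over $K'$, since passing to $K'$ only weakens the embedding conditions). Since each $\phi^{-m}(\beta_{l,i})$ is finite, the previous paragraph produces infinitely many $m$ for which $\phi^m(\alpha)$ is $S'$-integral relative to $\beta_{l,i}$. Because $\alpha$ is not preperiodic, the iterates $\phi^m(\alpha)$ are pairwise distinct, so $\forb(\alpha)\subset \P^1(K')$ contains infinitely many distinct points which are $S'$-integral relative to $\beta_{l,i}$. Applying Theorem \ref{sil} over $K'$ then forces $\beta_{l,i}$ to be exceptional for $\phi$, whence $\orb(\beta_{l,i})$ is finite, contradicting the assumption.

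The principal obstacle is the equivalence in the middle paragraph: one must verify that the Galois-orbit hypothesis on $\phi^{-m}(\beta_{l,i})$, combined with the $\overline K$-definition of relative $S$-integrality as a condition over all embeddings, promotes a single-point integrality statement to the divisor-level statement $\lambda_{(\phi^m)^{*}(\beta_{l,i}),v}(\alpha)=0$ --- which is precisely the form to which iterated projection via Proposition \ref{proj} applies to produce the desired forward-orbit integrality.
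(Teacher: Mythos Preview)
Your proof is correct and follows essentially the same route as the paper: use the Galois-conjugacy hypothesis to promote single-point integrality to divisor integrality, apply the projection formula to convert this into a forward-orbit statement, and finish with Silverman's Theorem. The only cosmetic differences are that you pass explicitly to $K'=K(\beta_{l,i})$ (which the paper leaves implicit when invoking Theorem~\ref{sil} with $\beta_{l,i}$ in the role of $\alpha$) and you phrase the final step as a contradiction, which neatly absorbs the case where $\beta_{l,i}$ happens to be exceptional. One trivial remark: the finiteness of $\bigcup_{k<l}\phi^{-k}(\beta)$ needs no appeal to Corollary~\ref{fin}; it is a finite union of finite fibres.
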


\begin{proof}
Enlarge $S$ so that $\phi$ had good reduction at all the places $v\not\in S$, and suppose $\gamma \in \phi^{-n}(\beta)$ is $S$-integral relative to $\alpha$ for $n\ge l$.  Then $\gamma \in \phi^{-m}(\beta_{l,i})$ for some $m\ge 0$, and all the points in $\phi^{-m}(\beta_{l,i})$ are $S$-integral relative to $\alpha$ since they are all Galois conjugates of $\gamma$.  Now $\lambda_{P,v}(\cdot)\ge 0$ and $(\phi^m)^*(\beta_{l,i})$ is an effective divisor (a divisor $\sum n_i P_i$ with each $n_i\ge 0$), so this is equivalent to saying  $(\phi^m)^*(\beta_{l,i})$ is $S$-integral relative to $\alpha$. By Corollary \ref{cor1}, this is furthermore equivalent to having $\phi^m(\alpha)$  being $S$-integral relative to $\beta_{l,i}$.  Therefore, for $n\ge l$, $\gamma \in \phi^{-n}(\beta)$ is $S$-integral relative to $\alpha$ if and only  if there is an $m$ such that $\phi^m(\alpha)$ is $S$-integral relative to $\beta_{l,i}\in \phi^{-l}(\beta)$. Since $\alpha$ is not preperiodic for  $\phi$ and each $\beta_{l,i}$ is not exceptional for $\phi$, Theorem \ref{sil} gives finitely many $m$ for which $\phi^m(\beta)$ is $S$-integral relative  to $\beta_{l,i}$.  Therefore, $$\bigcup_{n\ge l} \phi^{-n}(\beta) $$ contains finitely many points which are $S$-integral relative to $\alpha$.   Altogether, $\orb(\beta)$ contains finitely many points $S$-integral which are relative to $\alpha$.
\end{proof}

If the number if Galois orbits of $\phi^{-n}(\beta)$ is bounded independently of $n$, then the next theorem tells us that hypothesis of the Theorem \ref{cor2} is satisfied.  In Section \ref{dl}, it is show that the Dynamical Lehmer's Conjecture implies such a bound when $\beta$ is not preperiodic for $\phi$, and therefore implies Conjecture \ref{conj} in this case.

\begin{thm}\label{bound}
Suppose the number of Galois orbits for $\phi^{-n}(\beta)$ is bounded by a constant independent of $n$.  Then there exist an $l$ such that for each $\beta_{l,i}\in\phi^{-l}(\beta)$ and each $m\ge 0$, the points in $\phi^{-m}(\beta_{l,i})$ are all Galois conjugates over $K$.
\end{thm}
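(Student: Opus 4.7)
The plan is to set $N_n$ equal to the number of Galois orbits in $\phi^{-n}(\beta)$ and argue that $N_n$ is monotone non-decreasing in $n$; once bounded, it must stabilize, and from the point of stabilization the desired $l$ can be extracted by a simple counting argument.

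First, I would observe that the surjection $\phi: \phi^{-n}(\beta) \to \phi^{-(n-1)}(\beta)$ commutes with the action of $\mathrm{Gal}(\overline{K}/K)$, and hence descends to a surjection $\Phi_n$ from Galois orbits at level $n$ to Galois orbits at level $n-1$: given an orbit $O' \subset \phi^{-(n-1)}(\beta)$, lifting any point of $O'$ by $\phi$ and taking its Galois orbit produces an orbit at level $n$ mapping onto $O'$. This immediately gives $N_n \ge N_{n-1}$.

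Next, using the hypothesis that $\{N_n\}$ is bounded, I would pick $l$ with $N_n = N_l$ for every $n \ge l$. For each $m \ge 0$, consider the Galois-equivariant surjection $\phi^m : \phi^{-(l+m)}(\beta) \to \phi^{-l}(\beta)$ and, for each Galois orbit $O_i \subset \phi^{-l}(\beta)$, let $a_i(m)$ be the number of Galois orbits at level $l+m$ whose image under $\phi^m$ is $O_i$. Surjectivity gives $a_i(m) \ge 1$, while
\[
\sum_{i=1}^{N_l} a_i(m) \;=\; N_{l+m} \;=\; N_l
\]
forces $a_i(m) = 1$ for every $i$. Consequently the preimage $\phi^{-m}(O_i)$, which is a Galois-stable union of orbits at level $l+m$, consists of exactly one Galois orbit.

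Finally, for any $\beta_{l,i} \in O_i$, one has $\phi^{-m}(\beta_{l,i}) \subset \phi^{-m}(O_i)$, and the latter is a single Galois orbit; therefore every pair of points in $\phi^{-m}(\beta_{l,i})$ is conjugate over $K$, which is the conclusion.

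The only step requiring genuine thought is the monotonicity of $N_n$, i.e.\ that $\Phi_n$ is surjective; after that, the argument is just a pigeonhole computation inside the Galois-equivariant fiber diagram for $\phi^m$.
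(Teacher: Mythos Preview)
Your proof is correct and follows essentially the same strategy as the paper's: both select $l$ at the maximal (equivalently, stable) value of $N_n$, observe via Galois-equivariance of $\phi^m$ that the preimage of each orbit at level $l$ contains at least one orbit at level $l+m$, and then use a pigeonhole count to conclude that each such preimage is a single orbit. The only cosmetic difference is that you isolate the monotonicity $N_n \ge N_{n-1}$ as a preliminary step, whereas the paper absorbs it into the main argument by directly choosing $l$ with $N_l$ maximal.
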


\begin{proof}
Choose $l$ such that the points in $\phi^{-l}(\beta)$ lie in the maximal number of Galois orbits.  For $\beta_{l,i}\in\phi^{-l}(\beta)$, denote its Galois conjugates over $K$ as $$G(\beta_{l,i}) = \{\sigma(\beta_{l,i}) \mid \sigma \in \mbox{Gal}(\overline K/K) \}.$$

Then $\phi^{-m}(G(\beta_{l,i})) = G(\beta_{l+m,j})$ for some $\beta_{l+m,j}\in \phi^{-l-m}(\beta)$.  Indeed, $G(\beta_{l+m,j}) \subset \phi^{-m}(G(\beta_{l,i}))$ for $\beta_{l+m,j}\in \phi^{-m}(\beta_{l,i})$ since $(\phi^m \circ \sigma)(\beta_{l+m,j})=\sigma(\beta_{l,i})$ for all $\sigma \in \mbox{Gal}(\overline K/K)$.  Therefore each $\phi^{-m}(G(\beta_{l,i}))$ contains at least one Galois orbit over $K$, and by maximality of the number of orbits, each $\phi^{-m}(G(\beta_{l,i}))$ must contain exactly one such orbit.  So $\phi^{-m}(G(\beta_{l,i})) = G(\beta_{l+m,j})$, and the points in $\phi^{-m}(\beta_{l,i})$ are all Galois conjugates over $K$ since $\phi^{-m}(\beta_{l,i})\subset G(\beta_{l+m,j})$.
\end{proof}

R. Jones has established the hypothesis of Theorem \ref{bound} for certain quadratic polynomials with $\beta = 0$ \cite[Prop. 4.5, 4.6, 4.7]{rafe}.   His results, which are summarized in the following proposition, establish Conjecture \ref{conj} for those cases.

\begin{prop}\label{rafe}
Suppose $K=\mathbb Q$ and let $\phi(z)\in \mathbb Z[z]$ be one of the following quadratic polynomial:
\begin{enumerate}
  \item $\phi(z) = z^2 + c$ with $c\not=0,1$
  \item $\phi(z) = z^2 + bz - b$ with $b\not=0$
  \item $\phi(z) = z^2+bz-1$ with $b\not=0$
\end{enumerate}
Then the number of irreducible factors of $\phi^n(z)$ is at most two.
\end{prop}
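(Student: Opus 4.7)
My plan is to place the problem inside the arboreal Galois framework.  The roots of $\phi^n(z)$ form the leaves of a rooted complete binary tree $T_n$ of preimages of $0$, and the Galois group $G_n$ of the splitting field of $\phi^n(z)$ over $\mathbb Q$ embeds into $\operatorname{Aut}(T_n)$, which is the $n$-fold iterated wreath product of $C_2$.  The number of irreducible factors of $\phi^n(z)$ is precisely the number of $G_n$-orbits on the leaves of $T_n$, so the proposition becomes the uniform bound of $2$ on this orbit count.

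The key technical input is the quadratic-iterate factorization criterion of Stoll and Jones:  for a monic quadratic $\phi$ with critical point $\gamma$, passing from level $n-1$ to level $n$ of the tree can at most double the number of Galois orbits on the leaves, and an actual doubling happens only when a fresh square class, governed by the critical orbit value $\phi^n(\gamma)$, appears in $K_{n-1}^{\times}/(K_{n-1}^{\times})^{2}$.  Thus bounding the number of irreducible factors uniformly by $2$ reduces to showing that only two distinct square classes arise along the entire forward orbit of $\phi(\gamma)$ over $\mathbb Q$.

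I would then carry out this square-class analysis in each of the three families.  For $\phi(z) = z^2 + c$ the critical orbit is $0 \mapsto c \mapsto c^2 + c \mapsto \cdots$; under the hypothesis $c \neq 0, 1$, an induction on $n$ using $p$-adic valuations at primes dividing $c$, together with the recursion $\phi^{k+1}(0) = \phi^{k}(0)^2 + c$, shows that every $\phi^k(0)$ for $k \ge 2$ lies in the single square class of $c^2+c$.  For $\phi(z) = z^2 + bz - b$, the critical value at $-b/2$ admits a clean factorization, and an analogous induction pins the higher critical orbit into one square class after the first step.  The case $\phi(z) = z^2 + bz - 1$ is similar, exploiting the evaluations $\phi(\pm 1) = \pm b$ to keep the critical orbit in a controlled square class.

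The main obstacle is precisely this uniform $p$-adic control of the critical orbit: one must rule out, for every $n$ and uniformly in the parameter, the appearance of any third square class.  This is the technical heart of Jones's Propositions 4.5--4.7, and the reason the argument succeeds exactly for these three families is that in each case the critical value factors in a way compatible with an inductive valuation bookkeeping; absent such special structure one quickly loses control of the square classes and the bound $2$ can fail.
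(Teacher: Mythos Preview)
The paper does not prove this proposition at all; the statement is simply quoted from Jones \cite[Prop.~4.5, 4.6, 4.7]{rafe}, and no argument is supplied.  Your outline is a sketch of the method of that reference (arboreal Galois action on the pre-image tree, the Stoll--Jones irreducibility criterion, critical-orbit bookkeeping), so there is nothing in the paper to compare against: you are describing, not departing from, the cited source.

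That said, the mechanism you state is not quite right.  You claim the bound of two factors follows from showing that ``every $\phi^k(0)$ for $k\ge 2$ lies in the single square class of $c^2+c$'' in $\mathbb Q^{\times}/(\mathbb Q^{\times})^{2}$.  This is false already for $c=2$: there $\phi^2(0)=6$ and $\phi^3(0)=38$, and $6\cdot 38=228$ is not a rational square, yet every iterate $\phi^n(z)$ is irreducible (Eisenstein at $2$), so there is exactly one factor.  The relevant input is not ``how many square classes occur in the critical orbit over $\mathbb Q$'' but rather, level by level, whether a specific adjusted critical value (for $z^2+c$ it is $-\phi^n(0)$, and once a first split has occurred it becomes a norm from the field generated by a root of the surviving factor) is a square.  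Bounding the number of irreducible factors by two therefore amounts to showing that after at most one split each resulting factor becomes \emph{stable}, and that is what Jones's valuation arguments actually prove.  Your high-level plan is the correct one, but the specific reduction you wrote down would not go through as stated.
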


\begin{cor}
For $\beta =0$, Conjecture \ref{conj} is true for the quadratic polynomials satisfying the hypotheses of Proposition \ref{rafe}.
\end{cor}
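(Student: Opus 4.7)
The plan is to chain together the three preceding results with the single observation that, for a polynomial $\phi \in \mathbb{Z}[z]$, the factorization of $\phi^n(z)$ over $\mathbb{Q}$ is exactly the partition of $\phi^{-n}(0)$ into Galois orbits over $\mathbb{Q}$. Under this translation, Proposition \ref{rafe} is precisely a bound on the number of Galois orbits in $\phi^{-n}(0)$, and the rest is pure plumbing.

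First, I would note that $\gamma \in \phi^{-n}(0)$ if and only if $\gamma$ is a root of $\phi^n(z)$, and that two such roots lie in the same $\mathrm{Gal}(\overline{\mathbb{Q}}/\mathbb{Q})$-orbit exactly when they share a common irreducible factor of $\phi^n(z)$ over $\mathbb{Q}$. Thus the number of Galois orbits of $\phi^{-n}(0)$ equals the number of irreducible factors of $\phi^n(z)$ over $\mathbb{Q}$, which by Proposition \ref{rafe} is at most $2$ for every $n \ge 1$. In particular, this count is bounded by a constant independent of $n$.

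Next, this bound is exactly the hypothesis of Theorem \ref{bound}, which therefore supplies an integer $l$ such that for every $\beta_{l,i} \in \phi^{-l}(0)$ and every $m \ge 0$, the fiber $\phi^{-m}(\beta_{l,i})$ consists of a single Galois orbit over $\mathbb{Q}$. That is precisely the hypothesis required by Theorem \ref{cor2}. Invoking Theorem \ref{cor2} with $\beta = 0$, we conclude that for any $\alpha \in \mathbb{P}^1(\mathbb{Q})$ which is not preperiodic for $\phi$, the backward orbit $\mathcal{O}_{\phi}^{-}(0)$ contains only finitely many points of $\mathbb{P}^1(\overline{\mathbb{Q}})$ that are $S$-integral relative to $\alpha$, which is the statement of Conjecture \ref{conj}.

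There is really no obstacle here beyond checking the dictionary in the first paragraph, since the nontrivial content, namely the uniform bound on irreducible factors of $\phi^n(z)$, is imported from Jones's work via Proposition \ref{rafe}, and the passage from that bound to the conjecture is handled abstractly by Theorems \ref{bound} and \ref{cor2}. The one small care needed is to verify that the bound on Galois orbits is over the correct base field, but since $\phi$ has coefficients in $\mathbb{Z}$ and $K = \mathbb{Q}$, there is no discrepancy between factoring $\phi^n(z)$ over $K$ and counting $\mathrm{Gal}(\overline{K}/K)$-orbits in $\phi^{-n}(0)$.
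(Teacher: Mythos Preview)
Your proof is correct and follows exactly the approach the paper intends: the corollary is stated without a separate proof because the surrounding text already explains that Proposition~\ref{rafe} furnishes the hypothesis of Theorem~\ref{bound}, which in turn feeds into Theorem~\ref{cor2} to yield Conjecture~\ref{conj}. Your write-up simply makes this chain of implications explicit, including the observation that irreducible factors of $\phi^n(z)$ over $\mathbb{Q}$ correspond to Galois orbits of $\phi^{-n}(0)$.
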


\section{Proof of Main Theorem}

In this section, we will prove Theorem \ref{main-thm} by showing there are finitely many $\gamma\in \overline{K}$ satisfying: $\gamma^n=\beta$ for some $n\ge0$ and $\gamma$ is $S$-integral relative to a non-root of unity $\alpha$.   Conjecture \ref{conj} for the map $\phi(z)=z^d$ follows immediately since $\orb(\beta) \subset \{ \gamma \in \overline{K} \mid \gamma^n = \beta, \mbox{ for some } n\in \mathbb Z_{\ge 0} \}$.  Once the result is established for $\phi(z)=z^d$, we may use the functorial properties of integrality prove the conjecture for Chebyshev polynomials.  The main idea of the proof of Theorem \ref{main-thm} involves showing that the Galois orbits for $z^n-\beta$ is uniformly bounded when $\beta$ is not a root of unity.  A more general approach will require an understanding of the Galois group of points for $\phi^{-n}(\beta)$ which can utilize Theorem \ref{cor2}.

For $\beta=0$, Theorem \ref{main-thm} is trivial.  When $\beta$ is a root of unity, it is a theorem of Baker-Ih-Rumely \cite{bir}.

\begin{thm}[Baker-Ih-Rumely]\label{bir}
If $\alpha \in K$ is not 0 or a root of unity, then there are finitely many roots of unity in $\overline K$ which are S-integral relative to $\alpha$.
\end{thm}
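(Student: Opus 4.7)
The plan is to argue by contradiction, using equidistribution of roots of unity to turn the $S$-integrality hypothesis into a product-formula identity that forces $h(\alpha)=0$, contradicting Kronecker's theorem. Suppose there is an infinite sequence of distinct roots of unity $\zeta_1,\zeta_2,\ldots\in\overline K$, each $S$-integral relative to $\alpha$. Since any fixed number field contains only finitely many roots of unity, I can pass to a subsequence with $d_n:=[K(\zeta_n):K]\to\infty$. Because relative $S$-integrality is Galois-invariant, every element of the $\mathrm{Gal}(\overline K/K)$-orbit $O_n$ of $\zeta_n$ is also $S$-integral relative to $\alpha$.

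For each conjugate $\zeta'\in O_n$, I apply the product formula to $\alpha-\zeta'\neq 0$ and use $h(\zeta')=0$ together with $|\zeta'|_w=1$ at every non-archimedean place to obtain
$$\sum_{w\in M_{L_n}}\log\delta_w(\alpha,\zeta')\;=\;-h(\alpha)-h(\zeta')\;=\;-h(\alpha),$$
where $L_n=K(\zeta_n)$. The $S$-integrality hypothesis makes $\log\delta_w(\alpha,\zeta')=0$ at every $w\mid v\notin S$, so averaging over $\zeta'\in O_n$ gives
$$\sum_{v\in S}\frac{1}{d_n}\sum_{\zeta'\in O_n}\sum_{w\mid v}\log\delta_w(\alpha,\zeta')\;=\;-h(\alpha)\qquad\text{for every }n.$$
Next I invoke the Berkovich-adelic equidistribution theorem of Baker--Rumely \cite{br} and Favre--Rivera-Letelier \cite{FRL} already cited in the paper: since $h(\zeta_n)=0$ and $d_n\to\infty$, at each place $v$ the normalized Galois-orbit measures converge weakly to a canonical measure $\mu_v$, namely normalized Haar measure on $\{|z|=1\}\subset\mathbb C_v$ at archimedean $v$ and the Dirac mass at the Gauss point of $\mathbb P^{1,\mathrm{an}}_{\mathbb C_v}$ at non-archimedean $v$. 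Jensen's formula yields $\int\log\delta_v(\alpha,z)\,d\mu_v(z)=\log^+|\alpha|_v-\log\max\{|\alpha|_v,1\}=0$ at archimedean places, and a direct computation shows $\delta_v(\alpha,\mathrm{Gauss})=1$ at non-archimedean places, so the integral vanishes there as well. Passing to the limit as $n\to\infty$ forces $-h(\alpha)=0$, contradicting $h(\alpha)>0$ by Kronecker's theorem.

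The principal obstacle is justifying the passage to the limit in the displayed equation, because $\log\delta_v(\alpha,\cdot)$ has a logarithmic singularity at $z=\alpha$; weak convergence against continuous test functions is not enough. This is precisely where the potential-theoretic/Berkovich formulation of equidistribution in \cite{br,FRL} becomes essential, as it permits integration against quasi-continuous functions with logarithmic singularities. A more elementary alternative route, available when $K=\mathbb Q$, would be to rewrite the Galois norm $\prod_{\zeta'}(\alpha-\zeta')$ as a value of the cyclotomic polynomial $\Phi_n(\alpha)$, observe that $S$-integrality forces this to be an $S$-unit of archimedean size $\asymp|\alpha|^{\varphi(n)}$, and then extract a contradiction from the finiteness of $S$-units of bounded height (Northcott), sidestepping the equidistribution machinery altogether.
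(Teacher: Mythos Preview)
Your main argument is essentially the same strategy the paper attributes to Baker--Ih--Rumely: assume an infinite sequence of $S$-integral roots of unity, use the product formula to evaluate the averaged local sum as $-h(\alpha)$, and use equidistribution to show the same quantity tends to $0$, contradicting Kronecker. The one substantive difference is how you handle the logarithmic singularity of $\log\delta_v(\alpha,\cdot)$ at $z=\alpha$. The paper's sketch invokes A.~Baker's linear forms in logarithms, which gives an explicit repulsion bound $|\alpha-\zeta|_v \gg (\text{order of }\zeta)^{-C}$ and thereby justifies the limit by hand; you instead invoke the Berkovich potential-theoretic equidistribution of \cite{br,FRL}, which is formulated precisely so as to allow integration against such singular test functions. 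Both routes are valid; yours is the cleaner modern packaging, while the original has the merit of being effective.

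Your closing ``elementary alternative'' over $\mathbb Q$ does not work as stated. After enlarging $S$ so that $\alpha\in\mathcal O_{K,S}$, the $S$-integrality does force $\Phi_{m_n}(\alpha)\in\mathcal O_{K,S}^*$, but these $S$-units have height of order $\varphi(m_n)\,h(\alpha)\to\infty$, so Northcott yields nothing. To squeeze a contradiction out of the sequence $\Phi_{m_n}(\alpha)$ you still need to control the archimedean contributions, i.e.\ to show $\frac{1}{\varphi(m_n)}\log|\Phi_{m_n}(\alpha)|_v\to\log^+|\alpha|_v$, and when $|\alpha|_v=1$ at some archimedean $v$ this again requires either equidistribution or a Baker-type lower bound. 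The equidistribution machinery is not actually sidestepped.
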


Their proof is based on showing that if infinitely many roots of unity $\zeta_n$ are $S$-integral relative to $\alpha$ then the limit $$\lim_{n\to \infty} \frac{1}{[K(\zeta_n):\mathbb Q]}\sum_{v\in M_K} \; \sum_{\sigma:K(\zeta_n)/K \lra \overline{K}_v} \; \log(|\sigma(\zeta_n)-\alpha|_v)$$
converges to $h(\alpha)$, which is nonzero by a theorem of Kronecker \cite[Th. 1.5.9]{hdg}.  This requires a strong equidistribution theorem for the roots of unity and A. Baker's linear forms in logarithm \cite{ABak}.  A contradiction is then obtained by noting, via an interchange of summation and the product formula, that the above limit is zero.

It is possible to adapt their methods to the case when $\beta$ is not 0 or a root of unity; however, Lemma \ref{lang} along with Siegel's Theorem for $\mathbb G_m(K)$ gives a more concise proof.

\begin{lem}\label{lang}  Suppose $\beta \in K$ is not 0 or a root of unity. Then there is a finite extension $L$ of $K$ and a finite subset $D =
\{\beta_1,\dots, \beta_l \} \subset L$ such that every irreducible factor of $z^n-\beta$ over $L$ is of the form $z^m-\beta_i$ with $\beta_i \in D$ and $m\le n$.  Furthermore, each $\beta_i$ is a root of $\beta$.
\end{lem}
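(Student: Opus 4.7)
The plan is to combine Northcott's theorem (to control the possible bases $\beta_i$) with Capelli's classical theorem on the irreducibility of binomials (to force every irreducible factor of $z^n - \beta$ to be a binomial). For a finite extension $L$ of $K$, introduce
\[
T_L \;=\; \{\gamma \in L : \gamma^m = \beta \text{ for some } m \ge 1\}.
\]
The first observation is that $T_L$ is finite: any $\gamma \in T_L$ satisfies $h(\gamma) = h(\beta)/m \le h(\beta)$ and has degree at most $[L:\Q]$, so Theorem \ref{Northcott} applies. Since $\beta$ is not a root of unity, the integer $m$ is uniquely determined by $\gamma$, and I write $m(\gamma)$ for it. This set $T_L$ will be the candidate for $D$.

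Next, I would choose $L$ large enough that Capelli's theorem produces binomial factors. Capelli's criterion asserts that $z^n - a$ is reducible over a field $F$ if and only if either $a = \delta^p$ for some prime $p \mid n$ with $\delta \in F$, or $4 \mid n$ and $a = -4c^4$ for some $c \in F$. When $\mu_p \subset F$, the first case admits the factorization
\[
z^n - \delta^p \;=\; \prod_{k=0}^{p-1}\bigl(z^{n/p} - \zeta_p^k\,\delta\bigr),
\]
while the exceptional case collapses into the first with $p = 2$ once $i \in L$, since $-4 = (1+i)^4$ gives $\beta = (2ic^2)^2 \in L^2$. I therefore take $L$ to be the extension of $K$ obtained by adjoining $i$ together with the primitive $p$-th roots of unity for every prime $p$ dividing some $m(\gamma)$ with $\gamma \in T_L$. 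Since $T_L$ is finite, only finitely many such primes arise, and $L/K$ remains finite. The subtle point is that enlarging $K$ to $L$ may enlarge $T$; however, the new elements arise essentially by multiplying existing ones by cyclotomic roots of unity, and one argues that the set $\{m(\gamma) : \gamma \in T_L\}$ stabilizes after a finite number of such enlargements.

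With such an $L$ fixed, I would prove by induction on $n$ that every irreducible factor of $z^n - \beta$ over $L$ is a binomial $z^m - \beta_i$ with $\beta_i \in T_L$ and $m \mid n$. If $z^n - \beta$ is already irreducible over $L$, the statement is trivial. Otherwise, Capelli provides $\beta = \delta^p$ with $\delta \in L$ and $p$ a prime dividing $n$ (the $-4$ case having been absorbed into this one by the choice of $L$). Since $\mu_p \subset L$, the displayed factorization splits $z^n - \beta$ into the $p$ binomials $z^{n/p} - \zeta_p^k \delta$ over $L$; each base $\zeta_p^k \delta$ satisfies $(\zeta_p^k \delta)^p = \beta$ and hence lies in $T_L$. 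Applying the inductive hypothesis to each factor completes the recursion, and taking $D = T_L$ yields the lemma.

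The principal obstacle I expect is the self-consistency step in the construction of $L$: one must verify that after adjoining the cyclotomic roots required by the bases initially visible in $T_K$, no genuinely new values of $m(\gamma)$ appear in the enlarged $T_L$, so that the construction terminates at a finite extension of $K$. Once this is established, the Capelli recursion is mechanical and Northcott guarantees the finiteness of $D$.
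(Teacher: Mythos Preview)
Your overall strategy---Northcott to bound the set of roots of $\beta$ lying in $L$, then Capelli's criterion to drive a recursive binomial factorisation---is exactly the strategy the paper uses, and your absorption of the exceptional $-4c^4$ case by adjoining $i$ is in fact slightly cleaner than the paper's separate treatment of that case.

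The gap is precisely the one you flag at the end, and your proposed mechanism for closing it is not the right one. It is \emph{not} true in general that the new roots of $\beta$ appearing in $T_L$ are obtained from old ones by multiplication by cyclotomic units; a $q$-th root of $\beta$ that materialises in $L$ for a prime $q$ with $\beta\notin K^q$ need not be of that shape at all. What actually controls the situation is a degree argument, and this is how the paper resolves it. Let $p_t$ be the largest prime with $\beta\in K^{p}$ (finite by your Northcott step), and set $L=K(\zeta_p:\text{primes }p\le p_t)$ (together with $i$). If $\beta\in L^q$ for some prime $q>p_t$, then $\beta\notin K^q$, so $z^q-\beta$ is irreducible over $K$ and $K(\beta^{1/q})\subset L$ forces $q\mid [L:K]$. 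But $[L:K]$ divides a product of integers $p-1$ with $p\le p_t$, all of whose prime factors are strictly below $p_t$; hence $q\le p_t$, a contradiction. Thus every prime $p$ with $\beta\in L^p$ is at most $p_t$, and since $\zeta_p\in L$ for all such $p$, your Capelli recursion now runs inside $L$ without needing any further enlargement. The same degree argument shows that if instead you iterate your construction $L_0=K\subset L_1\subset\cdots$, then every $P_n=\{p:\beta\in L_n^{p}\}$ is contained in the finite set of primes $\le p_t$, so the ascending chain $P_0\subset P_1\subset\cdots$ stabilises; but the one-shot choice of $L$ is simpler.

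One further point: in the recursion you must also check that the \emph{new} bases $\zeta_p^k\delta$ inherit the relevant bound, i.e.\ that $\zeta_p^k\delta\in L^q$ forces $\beta=(\zeta_p^k\delta)^p\in L^q$, so no primes beyond those already handled for $\beta$ can appear at deeper levels. The paper makes this explicit; your inductive step implicitly needs it as well.
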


\begin{proof}
According to Capelli's Theorem \cite[VI \S 9]{langalg}, $z^n-\beta$ is irreducible over $K$ if the following two conditions holds: $\beta \not\in K^p$ for all $p$ dividing $n$, and $\beta \not\in -4K^4$ when 4 divides $n$.


Assume $4 \not| n$ or $\beta \not\in -4K^4$.   Then $z^n-\beta$ will be reducible over $K$ when $\beta \in K^p$ for a prime $p$ dividing $n$.  When
$\beta$ is not a root of unity, it can only be a prime power in $K$ for finitely many primes.  To see this, note that if $\alpha_i^{p_j}=\beta$ for $\alpha_i \in K$, then the set $\{\alpha_1,\alpha_2,\dots \}$ is a set of bounded height and bounded degree whose cardinality is greater than $\#\{p_1, p_2,\dots \}$.  By Northcott's Theorem, the set $\{p_1, p_2,\dots \}$ is finite.

Suppose $p_t$ is the largest prime for which $\beta$ is a prime power in $K$ and let $L=K(\zeta_p \mid \mbox{primes}\; p\le p_t)$.  Now $\beta \not\in L^q$ for some prime $q>p_t$.  Suppose it were, and note that $X^q - \beta$ would be irreducible over $K$ since $\beta \not\in K^q$.   This means
$[K(\beta^{1/q}):K]=q$ and $q$ would divide $[L:K]=\prod_{p \le p_t}(p-1)$. Therefore $q<p_t$, and this contradicts the assumption that $q>p_t$.  Let
$p_1,\dots,p_l$ be all the primes for which $\beta$ is a prime power in $L$ and let $s_i$ be the largest number such that $\beta \in L^{{p_i}^{s_i}}$.

If $n=p_1^{r_1}m_1$ with $p_1\not| m_1$, then we obtain the following factorization over $L$: $$z^n-\beta=(z^{p_1^{r_1-1}m_1} - a_1)(z^{p_1^{r_1-1}m_1} - \zeta_{p_1} a_1)\cdots (z^{p_1^{r_1-1}m_1} -\zeta_{p_1}^{p_1-1} a_1) $$ where $a_1^{p_1} = \beta$. If $\zeta_{p_1}^{j} a_1$ is a $q$th power in $L$ then $\beta = (\zeta_{p_1}^{j} a_1)^{p_1}$ is also a $q$th power in $L$.  Therefore $\zeta_{p_1}^{j} a_1$ cannot be a prime power in $L$ for any prime $q > p_t$. Furthermore, if $s$ is the largest number for which $\zeta_{p_1}^{j} a_1 \in L^{p_1^s}$ then $(\zeta_{p_1}^{j} a_1)^{p_1} = \beta \in L^{p_1^{s+1}}$, and so $s\le s_1 -1 $.  This means we may continue factoring at most $p_1^{s_1}$ times until we obtain factors of the form $z^{n_j}-b_j$ where $b_j$ is a root of $\beta$ and either $n_j=m_1$ or $b_j \not\in L^{p_1}$.   We now repeat the process for each prime $p_i|n_j$ where $\beta \in L^{p_i}$ and $p_i\not=p_1$. In the end, we will obtain factors $z^m-\beta'$ where $\beta'$ is a root of $\beta$ and either $\beta' \not\in L^{p_i}$ or $p_i \not| m$, for $1\le i\le l$.  Since $4\not|m$, by Capelli's Theorem, we have factored $z^n-\beta$ into irreducible factors.

Suppose $4|n$ and $\beta \in -4K^4$.  Let $\zeta_4=\sqrt{-1}$ and use $-4 = (1\pm \zeta_4)^4$ to note that $\beta \in -4K^4$ implies $\beta \in K(\zeta_4)^4$.  Let $s$ be the largest number for which $b^{4^s}=\beta$ where $b \in K(\zeta_4)$.  If $n=4^rm$ we obtain the following factorization over $K(\zeta_4)$ $$z^n - \beta = (z^{4^{r-1}m}-a_1)(z^{4^{r-1}m}-\zeta_4a_1)(z^{4^{r-1}m}-\zeta_4^2 a_1)(z^{4^{r-1}m}-\zeta_4 ^3 a_1) $$ where $a_1=b^{4^{s-1}}$.  If $\zeta_4^j a_1 \in K(\zeta_4)^4$ we may continue factoring to obtain at most $4^s$ factors of the form $z^{n_j}-b_j$ where $b_j$ is a root of $\beta$ and either $n_j=m$ or $b_j \not\in K(\zeta_4)^4 \supset -4K(\zeta_4)^4$.  This means either $4\not|n_j$ or $b_j \not\in -4K(\zeta_4)^4$, and we have reduced to the initial case where we consider the primes $p|n_j$ for which $b_j\in K(\zeta_4)^p$.  Since $b_j$ being a $q$th power in $K(\zeta_4)$ implies $\beta$ is a $q$th power in $K(\zeta_4)$, we may take $L=K(\zeta_4, \zeta_p \mid \mbox{primes}\; p\le p_t)$ where $p_t$ is the largest prime for which $\beta$ is a prime power in $K(\zeta_4)$.  Repeated factorizations will give irreducible factors of the form $z^m-\beta'$ where $\beta'$ is a root of $\beta$.

We have shown the irreducible factors of $z^n-\beta$ over $L$ are always of form $z^m-\beta'$ where $m\le n$ and $\beta'$ is a root of $\beta$ in $L$.  By Northcott's Theorem, the set $D=\{\beta' \in L \mid \beta' \; \mbox{is a root of $\beta$} \}$ is finite since it is of bounded height and degree.
\end{proof}

We now use Siegel's Theorem for integer points on $\mathbb G_m(K)$ to complete the proof of theorem \ref{main-thm}.

\begin{thm}[Siegel]\label{siegel}
Suppose $\Gamma$ is a finitely generated multiplicative subgroup of $\mathbb G_m(K)$.  Then $\Gamma$ contains finitely many points which are $S$-integral relative to $\alpha \in \mathbb G_m(K)$.
\end{thm}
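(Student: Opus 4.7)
The plan is to reduce the statement to the classical $S$-unit equation theorem. First, I would enlarge $S$ so that $\alpha$ and a chosen finite set of generators of $\Gamma$ all lie in $\mathcal O_{K,S}^*$; this is possible since $\Gamma$ is finitely generated and $\alpha \in K^*$. After this enlargement every $\beta \in \Gamma$ is an $S$-unit and $|\alpha|_v = 1$ for all $v \notin S$.

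Next, I would unwind what $S$-integrality relative to $\alpha$ means for a point $\beta \in K$. Using the affine-coordinate description from Section \ref{integrality}, the condition reduces to $|\beta - \alpha|_v \ge 1$ for every $v \notin S$, since the case $|\alpha|_v > 1$ no longer occurs after enlarging $S$. On the other hand, $\beta,\alpha \in \mathcal O_{K,S}$ already forces $|\beta - \alpha|_v \le 1$ at such $v$. Combining the two inequalities gives $|\beta - \alpha|_v = 1$ for all $v \notin S$, i.e.\ $\beta - \alpha \in \mathcal O_{K,S}^*$. So the $S$-integrality condition has upgraded $\beta - \alpha$ from an $S$-integer to an $S$-unit.

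Setting $u = \beta/\alpha$ and $w = (\beta - \alpha)/\alpha$ then yields the identity $u - w = 1$, where $u$ ranges in the finitely generated group $\alpha^{-1}\Gamma$ and $w$ in $\alpha^{-1}\mathcal O_{K,S}^*$, the latter finitely generated by Dirichlet's $S$-unit theorem. The classical Siegel--Mahler theorem on the $S$-unit equation $u - w = 1$, with $u,w$ constrained to finitely generated subgroups of $K^*$, produces only finitely many such pairs, hence finitely many $\beta \in \Gamma$. The real content here is the $S$-unit equation theorem itself---that is precisely what the attribution ``Siegel'' in the statement invokes---and the only step requiring genuine care is the translation above from relative $S$-integrality into an $S$-unit relation; once that upgrade is in hand, the classical machinery finishes the job.
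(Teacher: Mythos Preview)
Your proof is correct and is essentially the same argument as the paper's, just packaged through the $S$-unit equation rather than the geometric formulation. The paper enlarges $S$ to $S'$ so that every element of $\Gamma$ becomes $S'$-integral relative to both $0$ and $\infty$, and then quotes Siegel's theorem in the form ``$\mathbb P^1$ has only finitely many points $S'$-integral relative to three distinct points $0,\infty,\alpha$''; you carry out the identical enlargement and then observe that the resulting condition $\beta,\alpha,\beta-\alpha\in\mathcal O_{K,S}^*$ feeds directly into the $S$-unit equation $u-w=1$. Since Siegel's theorem for $\mathbb P^1\smallsetminus\{0,1,\infty\}$ and the finiteness of solutions to the $S$-unit equation are the same statement, the two routes coincide. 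One small wording issue: $\alpha^{-1}\Gamma$ and $\alpha^{-1}\mathcal O_{K,S}^*$ are cosets, not groups, but this is harmless since after your enlargement $u$ and $w$ already lie in the finitely generated group $\mathcal O_{K,S}^*$ itself.
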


Siegel's Theorem is usually stated as follows: if a curve $C$ over a number field $K$ has at least three distinct points at infinity then it contains finitely many points with coordinates in $\mathcal O_{K,S}$ \cite[Th.7.3.9]{hdg}.  This is equivalent to saying a curve $C$ contains finitely many points which are $S$-integral relative to three distinct points on $C$ (see \cite{drin}).  Take $\Gamma$, any finitely generated subgroup of $C(K)=\mathbb P^1(K)$ not containing 0 and $\infty$, and extend $S$ to $S'$ so that $\Gamma$ is contained in the set of points which are $S'$-integral relative to $0$ and $\infty$.  Then $\Gamma$ contains finitely many points which are $S'$-integral relative to $\alpha \not= 0, \infty$.  Since extending to $S'$ only increases the number of integral points we get Theorem \ref{siegel}.

\begin{proof}[Proof of Theorem \ref{main-thm}]
Without loss of generality, we may assume that $K$ is large enough for the factorization of Lemma \ref{lang} to hold without further extending $K$.
Indeed, for any finite extension $L$ of $K$, we can take $S_L$ to be the set of primes in $L$ lying over the primes in $S$ and note that the points
$S_L$-integral relative to $\alpha$ contains those points which are $S$-integral relative to $\alpha$.  Therefore, proving the theorem for the larger field $L$ establishes it for the smaller field $K$.

Suppose $\gamma^n=\beta$.   Lemma \ref{lang} implies that $\gamma$ is the root of an irreducible polynomial $z^m-\beta_i$ for some $m\le n$ and some $\beta_i \in \{\beta_1,\dots, \beta_l\}$.  Taking $z=\alpha$ and $K_\gamma$ to be the Galois closure of $K(\gamma)$, the equation $$|\alpha^m - \beta_i|_v = \prod_{\sigma \in \mbox{Gal}(K_{\gamma}/K) }|\alpha - \sigma(\gamma)|_v $$ gives that $\gamma$ is $S$-integral relative to $\alpha$ if and only if there is some $m$ for which $\alpha^m$ is $S$-integral relative to $\beta_i$.  By Siegel's Theorem, there are finitely many points of $\Gamma = \{\alpha^m \mid m \in \mathbb Z \}$ which are $S$-integral relative to each $\beta_i$, for $1\le i\le l$.  And since $\alpha$ is not a root of unity, there are only finitely many $m$ for which $\alpha^m$ is $S$-integral relative to $\beta_1,\dots,\beta_l$.  Therefore, there are finitely many $\gamma$ which are $S$-integral relative to $\alpha$.
\end{proof}

\begin{cor}\label{main-cor}
Conjecture \ref{conj} is true for the map $\phi(z)=z^d$.
\end{cor}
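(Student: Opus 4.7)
The plan is to deduce Corollary \ref{main-cor} directly from Theorem \ref{main-thm}, with essentially no additional work, since Theorem \ref{main-thm} was formulated precisely with this application in mind.

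First I would identify the preperiodic points of $\phi(z) = z^d$ in $\mathbb{P}^1(\overline{K})$. The points $0$ and $\infty$ are fixed by $\phi$, and for $\alpha \in \overline{K}^*$ the forward orbit $\{\alpha^{d^k}\}_{k \geq 0}$ is finite if and only if $\alpha^{d^i} = \alpha^{d^j}$ for some $i < j$, i.e.\ if and only if $\alpha$ is a root of unity. Hence a point $\alpha \in \mathbb{P}^1(K)$ fails to be preperiodic for $\phi$ precisely when $\alpha \in K$, $\alpha \neq 0$, and $\alpha$ is not a root of unity---this is exactly the hypothesis required by Theorem \ref{main-thm}.

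Next I would observe that
\[
\orb(\beta) \;=\; \bigcup_{k \geq 0} \phi^{-k}(\beta) \;=\; \bigl\{\gamma \in \overline{K} : \gamma^{d^k} = \beta \text{ for some } k \geq 0\bigr\} \;\subset\; \bigl\{\gamma \in \overline{K} : \gamma^n = \beta \text{ for some } n \geq 0\bigr\}.
\]
Theorem \ref{main-thm} asserts that the larger set on the right contains only finitely many points which are $S$-integral relative to $\alpha$, so the same conclusion holds for the smaller set $\orb(\beta)$.

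I do not expect any real obstacle here. The trivial edge cases $\beta \in \{0, \infty\}$ give $\orb(\beta) = \{\beta\}$, which is finite by inspection, so one may assume $\beta \in K^*$ and invoke Theorem \ref{main-thm}. The entire architecture of the paper---Theorem \ref{main-thm} together with the containment of the backward orbit of $z^d$ inside the set of $n$th roots of $\beta$ for varying $n$---was arranged exactly so that this corollary follows in a single line of reasoning.
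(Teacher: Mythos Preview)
Your proposal is correct and matches the paper's own reasoning: the paper explicitly states that Conjecture~\ref{conj} for $\phi(z)=z^d$ ``follows immediately since $\orb(\beta) \subset \{ \gamma \in \overline{K} \mid \gamma^n = \beta \text{ for some } n\in \mathbb Z_{\ge 0} \}$,'' and gives no further proof of the corollary. Your added identification of the preperiodic points and the handling of the trivial cases $\beta\in\{0,\infty\}$ are entirely in line with what the paper leaves implicit.
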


We can now use the projection formula to deduce Conjecture \ref{conj} for \emph{Chebyshev polynomials}.  These are defined as maps $T_d$ making the following diagram commute
$$
\begin{CD}
\mathbb G_m @>z^d>> \mathbb G_m \\
@VV\pi V  @VV\pi V \\
\P^1 @>T_d>> \P^1
\end{CD}
$$
where $\pi$ is a finite morphism. The first few Chebyshev polynomials obtained by taking $\pi(z) = z+z^{-1}$ are 
$$
\begin{array}{lll}
&T_2 = z^2-2 &T_3 = z^3-3z \\
&T_4 = z^4-4z^2+2 &T_5 = z^5-5z^3+5z.
\end{array}
$$
See \cite[Ch. 6]{ADS} for additional information and properties of Chebyshev polynomials.  

\begin{cor}
Conjecture \ref{conj} is true for Chebyshev polynomials.
\end{cor}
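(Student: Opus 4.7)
The plan is to lift the backward orbit problem for $T_d$ up to the power map $z^d$ via the semiconjugacy $\pi \circ (z \mapsto z^d) = T_d \circ \pi$, where $\pi(z) = z + z^{-1}$, and then invoke Corollary \ref{main-cor}. Written projectively, $\pi([X:Y]) = [X^2 + Y^2 : XY]$ has unit coefficients, so the resultant is a unit at every place and $\pi$ has good reduction everywhere.

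Let $\alpha \in K$ be non-preperiodic for $T_d$, and let $\beta \in K$. Choose lifts $\tilde\alpha, \tilde\beta \in \overline K$ with $\pi(\tilde\alpha) = \alpha$ and $\pi(\tilde\beta) = \beta$; each satisfies a quadratic equation of the form $z^2 - \cdot\, z + 1 = 0$. Replace $K$ by a finite extension $L$ containing $\tilde\alpha$ and $\tilde\beta$, and replace $S$ by the set $S_L$ of places of $L$ above $S$. This enlargement is harmless: any $\gamma$ that is $S$-integral relative to $\alpha$ over $K$ is still $S_L$-integral relative to $\alpha$ over $L$, so finiteness over $L$ yields finiteness over $K$. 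Crucially, $\tilde\alpha$ is neither $0$ nor a root of unity, for otherwise $\tilde\alpha$ would be preperiodic for $z \mapsto z^d$ and the semiconjugacy would force $\alpha = \pi(\tilde\alpha)$ to be preperiodic for $T_d$, contradicting the hypothesis. Thus $\tilde\alpha$ satisfies the hypotheses of Theorem \ref{main-thm}.

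Now suppose $\gamma \in \mathcal O_{T_d}^{-}(\beta)$ is $S_L$-integral relative to $\alpha$, with $T_d^n(\gamma) = \beta$. For any $\tilde\gamma \in \pi^{-1}(\gamma)$, iterating the semiconjugacy $n$ times gives $\pi(\tilde\gamma^{d^n}) = T_d^n(\pi(\tilde\gamma)) = \beta = \pi(\tilde\beta)$, so $\tilde\gamma^{d^n} \in \{\tilde\beta, \tilde\beta^{-1}\}$. On the integrality side, Corollary \ref{cor1} applied to $\pi$ says that $\gamma$ is $S_L$-integral relative to $\pi(\tilde\alpha) = \alpha$ if and only if the effective divisor $\pi^{*}(\gamma)$ is $S_L$-integral relative to $\tilde\alpha$; since local heights are non-negative, each point in the support of $\pi^*(\gamma)$, in particular $\tilde\gamma$ itself, is individually $S_L$-integral relative to $\tilde\alpha$.

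Applying Theorem \ref{main-thm} to $\tilde\alpha$ against each of $\tilde\beta$ and $\tilde\beta^{-1}$ yields only finitely many such $\tilde\gamma$, hence only finitely many $\gamma = \pi(\tilde\gamma)$. The only real subtleties are bookkeeping: verifying good reduction of $\pi$ (automatic from the form of $\pi$), enlarging $K$ to contain $\tilde\alpha$ and $\tilde\beta$, and transferring non-preperiodicity through the semiconjugacy. No new analytic content is needed beyond Theorem \ref{main-thm}; the whole argument is driven by the functoriality of relative $S$-integrality (the projection formula) combined with the semiconjugacy defining the Chebyshev polynomials.
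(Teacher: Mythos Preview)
Your proof is correct and follows essentially the same route as the paper: lift through the semiconjugacy $\pi$, use the projection formula (Corollary~\ref{cor1}) to transfer $S$-integrality, and invoke the result for $z^d$. The only cosmetic differences are that you pull back $\gamma$ and work relative to a single lift $\tilde\alpha$ (whereas the paper pulls back $\alpha$ and works relative to the divisor $\pi^*(\alpha)$, which is equivalent by symmetry of $\delta_v$), you cite Theorem~\ref{main-thm} directly rather than Corollary~\ref{main-cor}, and you make explicit both the finite extension $L\supseteq K(\tilde\alpha,\tilde\beta)$ and the everywhere-good reduction of $\pi$, points the paper leaves implicit.
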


\begin{proof}
Extend $S$ to contain all the places of bad reduction for $\pi$, and suppose $\alpha$ is not preperiodic for a Chebyshev polynomial $T_d$.  Since $\pi \circ z^d=T_d \circ \pi$, the points in $\pi^{-1}(\alpha)$ are not preperiodic for $\phi(z)=z^d$.  Let $\mathcal R$ be the set of points of $\mathcal{O}_{T_d}^{-}(\beta)$ which are $S$-integral relative to $\alpha$ and take $\gamma \in \pi^{-1}(\mathcal R)\subset\mathcal{O}_{z_d}^{-}(\pi^{-1}(\beta))$.  Then Corollary $\ref{cor1}$ gives that $\gamma$ is $S$-integral relative to $\pi^*(\alpha)$; consequently, $\gamma$ is $S$-integral relative to each point in $\pi^{-1}(\alpha)$.  Since the points in $\pi^{-1}(\alpha)$ are not preperiodic for $\phi(z)=z^d$, Corollary \ref{main-cor} implies that there are finitely many $\gamma\in\mathcal{O}_{z_d}^{-}(\pi^{-1}(\beta))$ which are $S$-integral relative to the points in  $\pi^{-1}(\alpha)$.  So $\pi^{-1}(\mathcal R)$ is finite, and therefore $\mathcal R$ is also finite.
\end{proof}

\section{Dynamic Lehmer and the Galois orbits of $\phi^{-n}(\beta)$}\label{dl}

For the map $\phi(z)=z^d$, Lemma \ref{lang} implies that when $\beta$ is not preperiodic for $\phi$, the number of Galois orbits of $\phi^n(z)-\beta$ is bounded by constant independent of $n$.  Here we will show that the Dynamical Lehmer's Conjecture implies a similar bound when $\phi$ is any rational map. In view of Theorem \ref{cor2} and Theorem \ref{bound}, this is sufficient to obtain Conjecture \ref{conj} when $\beta$ is not preperiodic for $\phi$.

Suppose $d=\deg(\phi)\ge 2$.  For $\beta \in \mathbb P^1(\overline{K})$, the \emph{canonical height associated to $\phi$} is defined as
$$\hat{h}_{\phi}(\beta) = \lim_{n \to \infty} \frac{h(\phi^n(\beta))}{d^n}.$$ This is due to Silverman and Tate and is useful when studying the dynamics of rational maps.

\begin{prop}
Let $\phi$ have degree $d\ge 2$ and $\hat{h}_{\phi}$ be the canonical height associated to $\phi$.  Then for $\beta \in \mathbb P^1(\overline{K})$,
\begin{enumerate}
  \item $\hat{h}_{\phi}(\beta)=0$ if and only if $\beta$ is preperiodic for $\phi$.
  \item $\hat{h}_{\phi}(\phi(\beta)) = d \hat{h}_{\phi}(\beta)$
  \item $\hat{h}_{\phi}(\beta)=h(\beta) + O(1)$ where $O(1)$ does not depend on $\beta$.
\end{enumerate}
\end{prop}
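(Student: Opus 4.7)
The plan is to derive all three properties from a single standard functorial estimate, namely $|h(\phi(\alpha)) - d\,h(\alpha)| \le C$ for a constant $C = C(\phi)$ independent of $\alpha$; this is already cited in the paper as [ADS, Th.~3.11] in the proof of Corollary~\ref{fin}. The rest is the usual Tate-style telescoping trick, followed by Northcott's Theorem for the one nontrivial implication.

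First I would establish convergence of the limit defining $\hat h_\phi(\beta)$ and simultaneously extract (iii). Applying the functorial bound to $\alpha = \phi^n(\beta)$ and dividing by $d^{n+1}$ gives
\[
\left|\frac{h(\phi^{n+1}(\beta))}{d^{n+1}} - \frac{h(\phi^n(\beta))}{d^n}\right| \le \frac{C}{d^{n+1}},
\]
so the partial sums form a geometrically Cauchy sequence and the limit exists. Telescoping from $n = 0$ to $\infty$ yields $|\hat h_\phi(\beta) - h(\beta)| \le C/(d-1)$, which is exactly (iii) with the $O(1)$ bound independent of $\beta$. Property (ii) is then immediate by reindexing:
\[
\hat h_\phi(\phi(\beta)) \;=\; \lim_{n\to\infty}\frac{h(\phi^{n+1}(\beta))}{d^n} \;=\; d\cdot\lim_{n\to\infty}\frac{h(\phi^{n+1}(\beta))}{d^{n+1}} \;=\; d\,\hat h_\phi(\beta).
\]

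The main work is property (i), and the expected obstacle is the reverse direction. The forward implication is routine: if $\beta$ is preperiodic, then $\forb(\beta)$ is finite, so $h(\phi^n(\beta))$ takes only finitely many values and $h(\phi^n(\beta))/d^n \to 0$. For the converse, the plan is to combine (ii) and (iii) with Northcott's Theorem (Theorem~\ref{Northcott}). Assume $\hat h_\phi(\beta) = 0$. Iterating (ii) gives $\hat h_\phi(\phi^n(\beta)) = d^n\hat h_\phi(\beta) = 0$ for every $n \ge 0$, and feeding this into (iii) applied to $\phi^n(\beta)$ yields $h(\phi^n(\beta)) \le C/(d-1)$ uniformly in $n$. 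Since $\phi^n(\beta) \in \mathbb P^1(K(\beta))$ for every $n$, the set $\forb(\beta)$ has bounded height and bounded degree, so Northcott's Theorem forces it to be finite; hence $\beta$ is preperiodic.
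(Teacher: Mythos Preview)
Your argument is correct and is exactly the standard Tate telescoping proof; the paper itself does not give a proof at all but simply cites \cite[Th.~3.20, 3.22]{ADS}, where essentially this same argument appears. So there is nothing to compare---you have supplied the details that the paper outsources to the reference.
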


\begin{proof}
See \cite[Th. 3.20, 3.22]{ADS}.
\end{proof}

Since points with small heights tend to have large degree, it is natural to ask how small we can make $\deg(\alpha)\hat{h}_{\phi}(\alpha)$.  An answer is provided by the Dynamical Lehmer's Conjecture \cite[Conj. 3.25]{ADS}.
\begin{conj}[Dynamical Lehmer]
If $\alpha \in \overline{K}$ is not preperiodic for $\phi$ then there is a constant $C=C(\phi, K)$ not depending on $\alpha$ such that $$\hat{h}_{\phi}(\alpha) > \frac{C}{\deg(\alpha)} $$ where $\deg(\alpha)  = [K(\alpha):K]$.
\end{conj}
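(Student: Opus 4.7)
This final statement is the \emph{Dynamical Lehmer Conjecture}, which specializes to the classical Lehmer problem (1933) in the case $\phi(z)=z^d$ and is wide open; no unconditional proof of the clean bound $\hat h_\phi(\alpha)>C/\deg(\alpha)$ is within reach of current techniques. The realistic target is a Dobrowolski-type bound of the form
$$\hat h_\phi(\alpha)\ge \frac{C}{\deg(\alpha)\bigl(\log\log\deg(\alpha)/\log\deg(\alpha)\bigr)^3},$$
which is strong enough to feed into Theorem \ref{bound} and therefore still yields Conjecture \ref{conj} when $\beta$ is not preperiodic for $\phi$. My plan is to adapt Dobrowolski's auxiliary-polynomial-plus-Frobenius method to the arithmetic-dynamical setting.

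Fix $\alpha \in \overline K$ non-preperiodic for $\phi$, set $D=[K(\alpha):K]$, and assume for contradiction that $\hat h_\phi(\alpha)$ is smaller than the claimed bound. Using the Silverman-Tate equivalence $\hat h_\phi=h+O(1)$ together with the telescoping $\hat h_\phi(\phi^k(\alpha))=d^k\hat h_\phi(\alpha)$, bound the naive heights of the early iterates $\alpha,\phi(\alpha),\dots,\phi^N(\alpha)$, for a parameter $N$ to be optimized. Then invoke Siegel's Lemma to construct a nonzero auxiliary polynomial $A(X_0,\dots,X_N) \in \mathcal{O}_K[X_0,\dots,X_N]$ of controlled multidegree and height that vanishes to high order at the tuple $\bigl(\alpha,\phi(\alpha),\dots,\phi^N(\alpha)\bigr)$, and by Galois invariance at all $D$ conjugate tuples. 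The number of free monomials is chosen large enough relative to the number of imposed vanishing conditions, calibrated against the hypothetical smallness of $\hat h_\phi(\alpha)$.

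The extrapolation step exploits primes $\mathfrak p$ of $K$ of good reduction for $\phi$ with residue characteristic $p\le T$, for a second parameter $T$. For each such $\mathfrak p$, the Frobenius $\sigma_{\mathfrak p}$ permutes the reductions $\widetilde{\alpha_i}$, and one combines the commutation $\widetilde{\sigma_{\mathfrak p}\circ\phi^k}=\widetilde{\phi^k\circ\sigma_{\mathfrak p}}$ with the product formula applied to specializations of the form $A\bigl(\sigma_{\mathfrak p}(\alpha_i),\phi(\sigma_{\mathfrak p}(\alpha_i)),\dots,\phi^N(\sigma_{\mathfrak p}(\alpha_i))\bigr)$ to force many mod-$\mathfrak p$ vanishings of $A$ that would contradict its non-vanishing construction. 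Finally one balances $N$ and $T$ exactly as in Dobrowolski's original optimization, pulling out the $(\log\log D/\log D)^3$-type factor.

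The decisive obstacle is the extrapolation step. In the cyclotomic case, Frobenius acts on roots of unity by $\zeta\mapsto \zeta^p$, which is literally a power of the map $z\mapsto z^d$ up to a $\gcd$, so each new prime $\mathfrak p$ automatically contributes a new algebraic identity arising from the very dynamics one wants to study. In the general dynamical setting, $\sigma_{\mathfrak p}$ permutes conjugates only \emph{geometrically}, with no intrinsic link to the iterates of $\phi$; producing enough algebraic relations per prime to drive the extrapolation is exactly where the conjecture is stuck and has been resolved only in special families (Lattès maps via the elliptic Lehmer problem, certain post-critically finite maps, etc.). A full proof would almost certainly require new input beyond the height machinery, most plausibly a quantitative refinement of the Berkovich-space equidistribution theorems of Baker-Rumely and Favre-Rivera-Letelier, or a genuinely new substitute for the cyclotomic Frobenius structure that is not currently available in arithmetic dynamics.
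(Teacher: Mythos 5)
The statement you were asked about is not proven in the paper: it is the Dynamical Lehmer Conjecture, recorded there as a conjecture (cited from \cite[Conj.\ 3.25]{ADS}), and the paper only surveys partial results (Dobrowolski's bound for the classical case, Masser's elliptic bound, M.~Baker's general approach) before using the full conjecture as a hypothesis in Theorem \ref{irr}. Your assessment that the statement is open, that the clean bound $\hat{h}_{\phi}(\alpha)>C/\deg(\alpha)$ is out of reach, and that the obstruction to transplanting Dobrowolski's auxiliary-polynomial-plus-Frobenius argument lies in the extrapolation step (no analogue of $\zeta\mapsto\zeta^p$ interacting with the iterates of a general $\phi$) is accurate and consistent with how the paper treats the statement. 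So there is no proof in the paper to compare against, and your sketch is a research plan rather than a proof, which you state honestly.

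One concrete error to fix: your claim that a Dobrowolski-type bound $\hat{h}_{\phi}(\alpha)\ge \frac{C}{\deg(\alpha)}\bigl(\log\log\deg(\alpha)/\log\deg(\alpha)\bigr)^{3}$ would be ``strong enough to feed into Theorem \ref{bound} and therefore still yields Conjecture \ref{conj}'' does not hold with the paper's arguments. In Theorem \ref{irr} one takes $\gamma\in\phi^{-n}(\beta)$ with $\deg(\gamma)\le d^{n}/\mu_{\phi,\beta}(n)$ and uses $\hat{h}_{\phi}(\gamma)=\hat{h}_{\phi}(\beta)/d^{n}$; with the Dobrowolski-type bound this only gives
$$\mu_{\phi,\beta}(n)\le \frac{\hat{h}_{\phi}(\beta)}{C}\left(\frac{\log\deg(\gamma)}{\log\log\deg(\gamma)}\right)^{3},$$
and since $\deg(\gamma)$ can be of size $d^{n}$ this bound grows roughly like $(n/\log n)^{3}$, hence is \emph{not} independent of $n$. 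Theorem \ref{bound} genuinely needs a bound independent of $n$: its proof chooses a level $l$ at which the number of Galois orbits of $\phi^{-l}(\beta)$ is maximal, and that maximum need not exist if the orbit count is unbounded. So a Dobrowolski-type result, even if you could prove it dynamically, would not by itself give Conjecture \ref{conj} along the paper's route; you would need either the clean $C/\deg(\alpha)$ bound or a different argument replacing the maximality step in Theorem \ref{bound}.
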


For $\phi(z)=z^2$, we have that the canonical height $\hat{h}_{\phi} = h$, the absolute height, and if $K=\mathbb Q$ then the Dynamical Lehmer's Conjecture reduces to the classical Lehmer's Conjecture \cite{lehmer}, which further predicts that $C(z^2, \mathbb Q ) = \log(\Omega)$ where $\Omega=1.1762\dots$ is a root of a 10th degree polynomial.  Much work has been done towards resolving these conjectures.  Currently, the best result for the classical Lehmer's Conjecture is given by Dobrowolski \cite{Dob}: $$h(\alpha)\ge\frac{C}{D(\alpha)}\left(\frac{\log\log D(\alpha)}{\log
D(\alpha)}\right)^3 $$ where $D(\alpha)=[\mathbb Q(\alpha):\mathbb Q]$.  If $\phi$ is a rational map associated to an elliptic curve $E/K$, Masser \cite{Mass} has shown $$\hat{h}_{\phi}(\alpha)\ge \frac{C}{D(\alpha)^3\log^2D(\alpha)}$$ where $D(\alpha)=[K(\alpha):K]$ and $\hat{h}_{\phi}(\alpha)\not=0$.   More recently, a general approach by M. Baker \cite{Mbak}, which involves giving a lower bound for a discriminant sum of Arakelov-Green's functions associated to an arbitrary rational map $\phi$, can be used to obtain Masser's estimate and the bound $h(\alpha)\ge C/(D(\alpha)^2\log D(\alpha))$ for the classical Lehmer's Conjecture.

Write $\phi^n(z)=f_n(z)/g_n(z)$ where $f_n(z)$ and $g_n(z)$ are relatively prime polynomials in $K[z]$.  For $\beta \in K$ define $\mu_{\phi, \beta}(n)$ as the number of irreducible factors of $\phi_{n, \beta}(z)=f_n(z)-\beta g_n(z)$ over $K$.  Since $\phi_{n, \beta}(\gamma)=0$ if and only if $\gamma \in \phi^{-n}(\beta)$, the Galois orbits over $K$ for $\phi^n(z)-\beta$ are grouped according to the irreducible factors of $\phi_{n, \beta}(z)$ over $K$.  In particular, the points in a single Galois orbit are precisely the zeros of the same irreducible factor, and the number of Galois orbits equals the number of irreducible factors.

\begin{thm}\label{irr}
Suppose $\beta \in K$ is not preperiodic for $\phi$.  Then the Dynamical Lehmer's Conjecture implies that $$\mu_{\phi, \beta}(n) \le \frac{\hat{h}_{\phi}(\beta)}{C}$$ where $C=C(\phi, K)$ is the constant in Lehmer's Conjecture.  Consequently, the number of Galois orbits of
$\phi^{-n}(\beta)$ over $K$ is at most $\hat{h}_{\phi}(\beta)/C$.
\end{thm}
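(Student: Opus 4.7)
The plan is to bound $[K(\gamma):K]$ from below for each Galois-orbit representative $\gamma \in \phi^{-n}(\beta)$ using Dynamical Lehmer, and then compare the sum of these degrees against $d^n$. Since the number of irreducible factors of $\phi_{n,\beta}(z)$ over $K$ equals the number of Galois orbits of its roots in $\overline K$, with the degree of each factor coinciding with $[K(\gamma):K]$ for any root $\gamma$ of that factor, this directly controls $\mu_{\phi,\beta}(n)$.

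First, for any $\gamma \in \phi^{-n}(\beta)$, iterating property (ii) of the canonical height yields
$$\hat{h}_{\phi}(\beta) = \hat{h}_{\phi}(\phi^n(\gamma)) = d^n\, \hat{h}_{\phi}(\gamma),$$
so $\hat{h}_{\phi}(\gamma) = \hat{h}_{\phi}(\beta)/d^n$. By property (i) this is strictly positive, since $\beta$ is not preperiodic, so $\gamma$ itself is not preperiodic. Applying Dynamical Lehmer to $\gamma$ then gives
$$\frac{\hat{h}_{\phi}(\beta)}{d^n} = \hat{h}_{\phi}(\gamma) > \frac{C}{[K(\gamma):K]},$$
which rearranges to $[K(\gamma):K] > C d^n/\hat{h}_{\phi}(\beta)$.

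Next, let $\gamma_1, \dots, \gamma_{\mu}$ be representatives of the $\mu = \mu_{\phi,\beta}(n)$ Galois orbits in $\phi^{-n}(\beta)$. Distinct orbits are disjoint, and the orbit of $\gamma_i$ has size $[K(\gamma_i):K]$, so
$$\sum_{i=1}^{\mu} [K(\gamma_i):K] = \#\phi^{-n}(\beta) \le d^n,$$
the last inequality holding because $\phi^n:\P^1\to \P^1$ has degree $d^n$. Combining the two bounds,
$$\mu \cdot \frac{Cd^n}{\hat{h}_{\phi}(\beta)} < \sum_{i=1}^{\mu} [K(\gamma_i):K] \le d^n,$$
which gives $\mu < \hat{h}_{\phi}(\beta)/C$, a slightly stronger form of the stated bound.

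I do not expect a serious obstacle; the proof is essentially a one-step application of Dynamical Lehmer once the height-scaling identity $\hat{h}_{\phi}(\gamma) = \hat{h}_{\phi}(\beta)/d^n$ is in hand. The only bookkeeping is that when $\beta$ lies in the critical value set of some iterate $\phi^k$, the polynomial $\phi_{n,\beta}(z)$ may have repeated irreducible factors, but since $\mu_{\phi,\beta}(n)$ counts only distinct factors and $\#\phi^{-n}(\beta)$ counts distinct roots, the argument is unaffected.
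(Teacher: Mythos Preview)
Your proof is correct and follows essentially the same approach as the paper: both use the height-scaling identity $\hat{h}_{\phi}(\gamma)=\hat{h}_{\phi}(\beta)/d^n$ and then apply Dynamical Lehmer to a point of $\phi^{-n}(\beta)$. The only cosmetic difference is that the paper uses pigeonhole to pick a single $\gamma$ with $\deg(\gamma)\le d^n/\mu_{\phi,\beta}(n)$ and applies Lehmer once, whereas you apply Lehmer to every orbit representative and sum; these are equivalent packagings of the same estimate.
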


\begin{proof}
Write $d=\deg(\phi)$ and suppose $\phi_{n,\beta}(z) = f_n(z)-\beta g_n(z)$ splits into $\m$ irreducible factors over $K$.  Now $\deg(\phi_{n,\beta}(z) ) \le d^n$, so we may take $\gamma \in \phi^{-n}(\beta)$ such that $\deg(\gamma) \le d^n/\m$.  Since $\hat{h}_{\phi}(\beta)=d^n\hat{h}_{\phi}(\gamma)$, the Dynamical Lehmer's Conjecture gives $$\frac{\hat{h}_{\phi}(\beta)}{d^n}=\hat{h}_{\phi}(\gamma) \ge \frac{C}{\deg(\gamma)}\ge \frac{C}{d^n} \m.$$  This  implies $\m \le \hat{h}_{\phi}(\beta)/C$.
\end{proof}

\bibliographystyle{amsalpha}
\bibliography{back_orb}

\end{document}